\theoremstyle{plain}
\newtheorem{theorem}{Theorem}[section]
\newtheorem{lemma}[theorem]{Lemma}
\newtheorem{proposition}[theorem]{Proposition}
\newtheorem{corollary}[theorem]{Corollary}
\theoremstyle{definition}
\newtheorem{definition}[theorem]{Definition}
\newtheorem{example}[theorem]{Example}
\newcommand{\ua}{\mathord{\uparrow}}
\newcommand{\da}{\mathord{\downarrow}}
\newcommand{\rom}[1]{\rm{\uppercase\expandafter{\romannumeral #1}}}
\def\ps@pprintTitle{%
  \let\@oddhead\@empty
  \let\@evenhead\@empty
  \def\@oddfoot{\reset@font\hfil\thepage\hfil}
  \let\@evenfoot\@oddfoot
}
\begin{document}

\begin{frontmatter}

\title{On some open problems concerning strong $d$-spaces and super $\mathrm{H}$-sober spaces\tnoteref{t1}}
\tnotetext[t1]{This work is supported by the National Natural Science Foundation of China (No.11771134) and by Hunan Provincial Innovation Foundation For Postgraduate (CX20200419)}

\author{Mengjie Jin}
\ead{mengjiejinjin@163.com}
\author{Hualin Miao\corref{a1}}
\ead{miaohualinmiao@163.com}
\author{Qingguo Li\corref{a1}}
\ead{liqingguoli@aliyun.com}
\address{School of Mathematics, Hunan University, Changsha, Hunan, 410082, China}
\cortext[a1]{Corresponding author.}

\begin{abstract}
In this paper, we highlight some open problems stated by Xu and Zhao. In particular, we focus on strong $d$-spaces and answer two open problems concerning strong $d$-spaces. One is about the product space of an arbitrary family of strong $d$-spaces. The other concerns the reflectivity of the category $\mathbf{S}$-$\mathbf{Top}_{d}$ of strong $d$-spaces. Meantime, for an irreducible subset system ($R$-subset system for short) $\mathrm{H}$ and a $T_{0}$ space $X$, we prove that $\mathrm{H}$ naturally satisfies property $M$. Based on this, we deduce that $X$ is super $\mathrm{H}$-sober iff $X$ is $\mathrm{H}$-sober and $\mathrm{H}$ satisfies property $Q$. Additionally, we give positive answers to some questions posed by Xu in \cite{Xiaoquan21}. Furthermore, we obtain that the finite product of hyper-sober spaces is hyper-sober.
\end{abstract}
\begin{keyword}
Strong $d$-space, Product space, Reflection, Super $\mathrm{H}$-sober space, Hyper-sober space

\MSC 54B20\sep 06B35\sep 06F30
\end{keyword}
\end{frontmatter}

\section{Introduction}
Sobriety,  monotone convergence and well-filteredness are three of the most important and useful properties in non-Hausdorff topological spaces and domain theory (see \cite{clad3},\cite{nht2},\cite{R. Heckmann} and \cite{Zhao Xu}). In recent years, sober spaces, monotone convergence spaces (shortly called $d$-spaces), well-filtered spaces and their related structures have been introduced and investigated. In order to uncover more finer links between $d$-spaces and well-filtered spaces, the concept of strong $d$-spaces was proposed in \cite{Zhao Xu}. And in \cite{Xiaoquan}, Xu and Zhao posed the following two questions:

{\bf Question 1.1.} Is the product space of arbitrary family of strong $d$-spaces again a strong $d$-space?

{\bf Question 1.2.} Is $\mathbf{S}$-$\mathbf{Top}_{d}$ reflective in $\mathbf{Top}_{0}$?

In \cite{Xiaoquan21}, Xu introduced a uniform approach to sober spaces, $d$-spaces and well-filtered spaces and developed a general frame for dealing with all these spaces. The concepts of irreducible subset systems ($R$-subset systems for short), $\mathrm{H}$-sober spaces and super $\mathrm{H}$-sober spaces for an $R$-subset system $\mathrm{H}$ were proposed. Let $\mathbf{Top}_{0}$ be the category of all $T_{0}$ spaces and $\mathbf{Sob}$ the category of all sober spaces. The category of all $\mathrm{H}$-sober spaces (resp., super $\mathrm{H}$-sober spaces) with continuous mappings is denoted by $\mathbf{H}$-$\mathbf{Sob}$ (resp., $\mathbf{SH}$-$\mathbf{Sob}$). For any $R$-subset system $\mathrm{H}$, it has been proven that $\mathbf{H}$-$\mathbf{Sob}$ is a full subcategory  of $\mathbf{Top}_{0}$ containing $\mathbf{Sob}$ and is closed with respect to homeomorphisms. Moreover, $\mathbf{H}$-$\mathbf{Sob}$ is adequate (see Theorem 7.9 in \cite{Xiaoquan21}). Conversely, for a full subcategory $\mathbf{K}$ of $\mathbf{Top}_{0}$ containing $\mathbf{Sob}$, suppose that $\mathbf{K}$ is adequate and closed with respect to homeomorphisms. There is a natural question: Is there an R-subset system $\mathrm{H}$ such that $\mathbf{K}=\mathbf{H}$-$\mathbf{Sob}$? In this paper, we will prove the answer is positive. In \cite{Xiaoquan21}, Xu also proposed an $R$-subset system $\mathrm{H}:\mathbf{Top}_{0}\longrightarrow \mathbf{Set}$ is said to satisfy property $M$ if for any $T_{0}$ space $X$, $\mathcal{K}\in \mathrm{H}(P_{S}(X))$ and $A\in M(\mathcal{K})$, then $\{\ua (K\cap A)\mid K\in \mathcal{K}\}\in \mathrm{H}(P_{S}(X))$. Furthermore, he obtained some results under the assumption that $\mathrm{H}$ has property $M$. Then he posed the following questions:

{\bf Question 1.3.} Let $\mathrm{H}:\mathbf{Top}_{0}\longrightarrow \mathbf{Set}$ be an $R$-subset system (may not have property $M$) and $\{X_{i}\mid i\in I\}$ a
family of super $\mathrm{H}$-sober spaces. Is the product space $\prod_{i\in I}X_{i}$ super $\mathrm{H}$-sober?

{\bf Question 1.4.} Let $\mathrm{H}:\mathbf{Top}_{0}\longrightarrow \mathbf{Set}$ be an $R$-subset system, $X$ a $T_{0}$ space and $Y$ a super $\mathrm{H}$-sober space. Is the function space $\mathbf{Top}_{0}(X,Y)$ equipped with the topology of pointwise convergence super $\mathrm{H}$-sober?

{\bf Question 1.5.} Let $\mathrm{H}:\mathbf{Top}_{0}\longrightarrow \mathbf{Set}$ be an $R$-subset system, $X$ a super $\mathrm{H}$-sober space and $Y$ a $T_{0}$ space. For a pair of continuous mappings $f,g:X\rightarrow Y$, is the equalizer $E(f,g)=\{x\in X\mid f(x)=g(x)\}$ (as a
subspace of $X$) super $\mathrm{H}$-sober?

{\bf Question 1.6.} For an $R$-subset system $\mathrm{H}:\mathbf{Top}_{0}\longrightarrow \mathbf{Set}$, is $\mathrm{H}$ $\mathbf{SH}$-$\mathbf{Sob}$ complete?

{\bf Question 1.7.} If an $R$-subset system $\mathrm{H}:\mathbf{Top}_{0}\longrightarrow \mathbf{Set}$ has property $M$, do the induced $R$-subset systems $\mathrm{H}^{d}$, $\mathrm{H}^{R}$ and $\mathrm{H}^{D}$ have property M?

{\bf Question 1.8.} For an $R$-subset system $\mathrm{H}:\mathbf{Top}_{0}\longrightarrow \mathbf{Set}$, is $\mathbf{SH}$-$\mathbf{Sob}$ reflective in $\mathbf{Top}_{0}$? Or equivalently,
for any $T_{0}$ space $X$, does the super $\mathrm{H}$-sobrification of $X$ exist?

{\bf Question 1.9.} Let $\mathrm{H}:\mathbf{Top}_{0}\longrightarrow \mathbf{Set}$ be an $R$-subset system having property $M$. Is $\mathrm{R}:\mathcal{H}\rightarrow \mathcal{H}$, $\mathrm{H}\mapsto \mathrm{H}^{R}$ a closure operator?

In \cite{Zhao15}, Zhao and Ho introduced a new variant of sobriety, called hyper-sobriety. A topological space $X$ is called hyper-sober if for any irreducible set $F$, there is a unique $x\in F$ such that $F\subseteq \mathrm{cl}(\{x\})$. Clearly, every hyper-sober space is sober. Later, Wen and Xu discussed some basic properties of hyper-sober spaces in \cite{Wennana}. Moreover, they posed a question:

{\bf Question 1.10.} Is the product space of two hyper-sober spaces again a hyper-sober space?

In Section 3, we will give negative answers to Question 1.1 and Question 1.2.

In Section 4, we find that property $M$ mentioned above naturally holds for each $R$-subset system $\mathrm{H}$. Based on this, for a $T_{0}$ space $X$, we deduce that $X$ is super $\mathrm{H}$-sober iff $X$ is $\mathrm{H}$-sober and $\mathrm{H}$ satisfies property $Q$. Additionally, we give positive answers to Question 1.3$\sim$Question 1.8. Furthermore, we prove Question 1.9 holds.

In Section 5, we will give a positive answer to Question 1.10.
\section{Preliminaries}
In this section, we briefly recall some standard definitions and notations to be used in this paper. For further details, refer to \cite{clad3,nht2,R. Heckmann,Heckmann,Xiaoquan20}.

Let $P$ be a poset and $A\subseteq P$. We denote $\da A=\{x\in P \mid x\leq a \mbox{ for some } a\in A\}$ and $\ua A=\{x\in P \mid x\geq a \mbox{ for some } a\in A\}$. For any $a\in P$, we denote $\da\{a\}=\da a=\{x\in P \mid x\leq a\}$
and $\ua\{a\}=\ua a=\{x\in P \mid x\geq a\}$. $\da a$ and $\ua a$ are called \emph{principal ideal} and \emph{principal filter} respectively. A subset $A$ is called a \emph{lower set} (resp., an \emph{upper set}) if $A=\da A$ (resp., $A=\ua A$).

In \cite{clad3}, a subset $D$ of poset $P$ is \emph{directed} provided that it is nonempty and every finite subset of $D$ has an upper bound in $D$. The set of all directed sets of $P$ is denoted by $\mathcal{D}(P)$. The poset $P$ is called a \emph{dcpo} if every directed subset $D$ in $P$ has a supremum. A subset $U$ of $P$ is \emph{Scott open} if (1)$U=\ua U$ and (2) for any directed subset $D$ for which $\vee D$ exists, $\vee D\in U$ implies $D\cap U\neq \emptyset$. All Scott open subsets of $P$ from a topology, and we call this topology the \emph{Scott topology} on $P$ and denoted by $\sigma(P)$. The \emph{upper topology} on $P$ , generated by the complements of the principal ideals of $P$, is denoted by $\upsilon(P)$.

The category of all $T_{0}$ spaces with continuous mappings is denoted by $\mathbf{Top}_{0}$. For a $T_{0}$ space $X$, let $\mathcal{O}(X)$ (resp., $\Gamma(X)$) be the set of all open subsets (resp., closed subsets) of $X$. For a subset $A$ of $X$, the closure of $A$ is denoted by $\mathrm{cl}(A)$ or $\overline{A}$. We use $\leq_{X}$ to represent the specialization order of $X$, that is, $x\leq_{X}y$ iff $x\in \overline{\{y\}}$. A subset $B$ of $X$ is called \emph{saturated} if $B$ equals the intersection of all open sets containing it (equivalently, $B$ is an upper set in the specialization order). Let $S(X)=\{\{x\}\mid x\in X\}$ and $S_{c}(X)=\{\overline{\{x\}}\mid x\in X\}$.
A $T_{0}$ space $X$ is called a \emph{$d$-space} (i.e., \emph{monotone convergence space}) if $X$ (with the specialization order) is a $dcpo$ and $\mathcal{O}(X)\subseteq \sigma(X)$ (\cite{clad3}).

For a $T_{0}$ space $X$ and a nonempty subset $A$ of $X$, $A$ is called \emph{irreducible} if for any $B,C\in \Gamma(X)$, $A\subseteq B\cup C$ implies $A\subseteq B$ or $A\subseteq C$. Denote by $\mathrm{Irr}(X)$ the set of all irreducible subsets of $X$ and $\mathrm{Irr}_{c}(X)$ the set of all closed irreducible subsets of $X$. A topological space $X$ is called \emph{sober}, if for any $A\in \mathrm{Irr}_{c}(X)$, there is a unique point $a\in X$ such that $A=\overline{\{a\}}$. The category of all sober spaces with continuous mappings is denoted by $\mathbf{Sob}$.

For a topological space $X$, $\mathcal{G}\subseteq 2^{X}$ and $A\subseteq X$, let $\Box_{\mathcal{G}}(A)=\{G\in \mathcal{G}\mid G\subseteq A\}$ and $\diamondsuit_{\mathcal{G}}(A)=\{G\in \mathcal{G}\mid G\cap A\neq \emptyset\}$. The symbols $\Box_{\mathcal{G}}(A)$ and $\diamondsuit_{\mathcal{G}}(A)$ will be simply written as $\Box A$ and $\diamondsuit A$ respectively if there is no
confusion. The \emph{upper Vietoris topology} on $\mathcal{G}$ is the topology that has $\{\Box U\mid U\in \mathcal{O}(X)\}$ as a base, and the resulting space is denoted by $P_{S}(\mathcal{G})$. The \emph{lower Vietoris topology} on $\mathcal{G}$ is the topology that has $\{\diamondsuit U\mid U\in \mathcal{O}(X)\}$ as a subbase, and the resulting space is denoted by $P_{H}(\mathcal{G})$. If $\mathcal{G}\subseteq \mathrm{Irr}(X)$, then $\{\diamondsuit U\mid U\in \mathcal{O}(X)\}$ is a topology on $\mathcal{G}$.

In what follows, The category $\mathbf{K}$ always refers to a full subcategory $\mathbf{Top}_{0}$ containing $\mathbf{Sob}$, the objects of $\mathbf{K}$ are called \emph{$\mathbf{K}$-spaces}.

\begin{definition}\label{k-reflection}\cite{Xiaoquan20}
Let $X$ be a $T_{0}$ space. A \emph{$\mathbf{K}$-reflection} of $X$ is a pair $\langle \widehat{X},\mu\rangle$ consisting of a $\mathbf{K}$-space $\widehat{X}$ and a continuous mapping $\mu:X\rightarrow \widehat{X}$ satisfying that for any continuous mapping $f:X\rightarrow Y$ to a $\mathbf{K}$-space, there exists a unique continuous mapping $f^{\ast}:\widehat{X}\rightarrow Y$ such that $f^{\ast}\circ \mu= f$, that is, the following diagram commutes.
\end{definition}
\centerline{\xymatrix{X \ar[dr]_{f} \ar[r]^{\mu}
                & \widehat{X} \ar[d]^{f^{\ast}}  \\
                &  Y }}

By a standard argument, $\mathbf{K}$-reflections, if they exist, are unique to homeomorphisms. We shall use $X^{k}$ to indicate the space of the $\mathbf{K}$-reflection of $X$ if it exists.

\begin{definition}\cite{Xiaoquan20}\label{k-determined}
Let $(X,\tau)$ be a $T_{0}$ space. A nonempty subset $A$ of $X$ is called \emph{$\mathbf{K}$-determined} provided for any continuous mapping $f: X\rightarrow Y$ to a $\mathbf{K}$-space $Y$, there exists a unique $y_{A}\in Y$ such that $\mathrm{cl}_{Y}(f(A))=\mathrm{cl}_{Y}(\{y_{A}\})$. Clearly, a
subset $A$ of a space $X$ is a $\mathbf{K}$-determined set iff $\mathrm{cl}(A)$ is a K-determined set. Denote by $\mathrm{K}^{d}(X)$ the set of all $\mathbf{K}$-determined subsets of $X$ and $\mathrm{K}^{d}_{c}(X)$ the set of all closed $\mathbf{K}$-determined subsets of $X$.
\end{definition}

\begin{lemma}\label{1}
Let $X$, $Y$ be two $T_{0}$ spaces. If $f:X\rightarrow Y$ is a continuous mapping and $A\in \mathrm{K}^{d}(X)$, then $f(A)\in \mathrm{K}^{d}(Y)$.
\end{lemma}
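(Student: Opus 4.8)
The plan is to exploit the fact that being $\mathbf{K}$-determined is a condition tested against \emph{all} continuous maps into $\mathbf{K}$-spaces, a class of test maps that is closed under precomposition. First I would note that $f(A)$ is nonempty, since $A$ is nonempty, so $f(A)$ is an admissible candidate for membership in $\mathrm{K}^{d}(Y)$. Then, to verify $f(A)\in \mathrm{K}^{d}(Y)$, I would take an arbitrary continuous mapping $g:Y\rightarrow Z$ with $Z$ a $\mathbf{K}$-space and consider the composite $g\circ f:X\rightarrow Z$, which is again a continuous mapping into a $\mathbf{K}$-space.

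Since $A\in \mathrm{K}^{d}(X)$, applying the definition to $g\circ f$ yields a unique $z\in Z$ with $\mathrm{cl}_{Z}((g\circ f)(A))=\mathrm{cl}_{Z}(\{z\})$. The key (set-theoretic) observation is the identity $(g\circ f)(A)=g(f(A))$, so this same $z$ satisfies $\mathrm{cl}_{Z}(g(f(A)))=\mathrm{cl}_{Z}(\{z\})$, which gives the existence part of the defining condition for $f(A)$. For uniqueness, if some $z'\in Z$ also satisfies $\mathrm{cl}_{Z}(g(f(A)))=\mathrm{cl}_{Z}(\{z'\})$, then $\mathrm{cl}_{Z}(\{z'\})=\mathrm{cl}_{Z}((g\circ f)(A))$, and the uniqueness clause in the definition of $\mathbf{K}$-determinedness applied to $g\circ f$ forces $z'=z$ (equivalently, $\mathrm{cl}_{Z}(\{z\})=\mathrm{cl}_{Z}(\{z'\})$ together with $Z$ being $T_{0}$ gives $z=z'$). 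As $g$ was an arbitrary continuous mapping from $Y$ to a $\mathbf{K}$-space, this shows $f(A)\in \mathrm{K}^{d}(Y)$.

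I do not expect any real obstacle here: the statement is a formal consequence of the definition together with the stability of the test class under composition. The only points needing (routine) care are that $f(A)\neq\emptyset$, so that it qualifies as a potential $\mathbf{K}$-determined set, and that the uniqueness of the witness point for $f(A)$ is inherited from the uniqueness already built into the definition for $A$, rather than requiring a separate argument.
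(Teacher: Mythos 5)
Your proposal is correct and follows exactly the same route as the paper's proof: test $f(A)$ against an arbitrary continuous $g:Y\rightarrow Z$ into a $\mathbf{K}$-space by applying the definition of $\mathbf{K}$-determinedness of $A$ to the composite $g\circ f$. The extra remarks on nonemptiness and on uniqueness (which in any case follows from $Z$ being $T_{0}$) are fine but add nothing beyond what the paper's one-line argument already contains.
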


\begin{proof}
Let $Z$ be a $\mathbf{K}$-space and $g:Y\rightarrow Z$ is continuous. Note that $g\circ f:X\rightarrow Z$ is continuous and $A\in \mathrm{K}^{d}(X)$, then there exists an element $z\in Z$ such that $\overline{g\circ f(A)}=\overline{g(f(A))}=\overline{\{z\}}$. So $f(A)\in \mathrm{K}^{d}(Y)$.
\end{proof}

\begin{definition}\cite{Xiaoquan20}
A full subcategory $\mathbf{K}$ of $\mathbf{Top}_{0}$ is said to be \emph{closed with respect to homeomorphisms} if homeomorphic copies of $\mathbf{K}$-spaces are $\mathbf{K}$-spaces.
\end{definition}

\begin{definition}\cite{Xiaoquan20}
$\mathbf{K}$ is called \emph{adequate} if for any $T_{0}$ space $X$, $P_{H}(\mathrm{K}^{d}_{c}(X))$ is a $\mathbf{K}$-space.
\end{definition}

\begin{corollary}\cite{Xiaoquan20}\label{3}
 Assume that $\mathbf{K}$ is adequate and closed with respect to homeomorphisms. Then for any $T_{0}$ space $X$, the following conditions are equivalent:
 \begin{enumerate}[(1)]
 \item $X$ is a $\mathbf{K}$-space.
 \item $\mathrm{K}^{d}_{c}(X)=S_{c}(X)$, that is, for each $A\in \mathrm{K}^{d}_{c}(X)$, there exists an $x\in X$ such that $A=\overline{\{x\}}$.
 \item $X\cong X^{k}$.
 \end{enumerate}
\end{corollary}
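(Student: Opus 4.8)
The plan is to treat the two equivalences $(1)\Leftrightarrow(2)$ and $(1)\Leftrightarrow(3)$ separately, since each draws on exactly one of the two standing hypotheses: adequacy is needed only for $(2)\Rightarrow(1)$, and closedness under homeomorphisms only for $(3)\Rightarrow(1)$, while $(1)\Rightarrow(2)$ and $(1)\Rightarrow(3)$ are formal. I would first record, with no hypothesis on $\mathbf{K}$, that $S_{c}(X)\subseteq\mathrm{K}^{d}_{c}(X)$ for every $T_{0}$ space $X$: given $x\in X$ and a continuous map $f\colon X\to Y$ into a $\mathbf{K}$-space, continuity yields $f(\overline{\{x\}})\subseteq\overline{\{f(x)\}}$, and since $f(x)\in f(\overline{\{x\}})$ we get $\mathrm{cl}_{Y}(f(\overline{\{x\}}))=\overline{\{f(x)\}}$, the witnessing point being unique because $Y$ is $T_{0}$. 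For $(1)\Rightarrow(2)$ it then remains only to prove $\mathrm{K}^{d}_{c}(X)\subseteq S_{c}(X)$ when $X$ is itself a $\mathbf{K}$-space: given $A\in\mathrm{K}^{d}_{c}(X)$, apply Definition \ref{k-determined} to the continuous identity $\mathrm{id}_{X}\colon X\to X$ to obtain $x\in X$ with $\overline{A}=\overline{\{x\}}$; as $A$ is closed, $A=\overline{\{x\}}\in S_{c}(X)$.

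For $(2)\Rightarrow(1)$ I would consider the canonical map $\mu\colon X\to P_{H}(\mathrm{K}^{d}_{c}(X))$, $x\mapsto\overline{\{x\}}$, which is well defined by the inclusion just noted. Using that an open set is an upper set in the specialization order, one checks $\overline{\{x\}}\cap U\neq\emptyset\iff x\in U$ for every $U\in\mathcal{O}(X)$, whence $\mu^{-1}(\diamondsuit U)=U$ and $\mu(U)=\diamondsuit U\cap S_{c}(X)$; thus $\mu$ is continuous, and it is injective since $X$ is $T_{0}$. Under hypothesis $(2)$ we have $\mathrm{K}^{d}_{c}(X)=S_{c}(X)$, so $\mu$ is moreover surjective and open, hence a homeomorphism $X\cong P_{H}(\mathrm{K}^{d}_{c}(X))$. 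Since $\mathbf{K}$ is adequate, $P_{H}(\mathrm{K}^{d}_{c}(X))$ is a $\mathbf{K}$-space, and since $\mathbf{K}$ is closed with respect to homeomorphisms, $X$ is a $\mathbf{K}$-space.

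For $(1)\Leftrightarrow(3)$: if $X$ is a $\mathbf{K}$-space, then $\langle X,\mathrm{id}_{X}\rangle$ trivially satisfies the universal property of Definition \ref{k-reflection} (take $f^{\ast}=f$), so the $\mathbf{K}$-reflection of $X$ exists and, by its uniqueness up to homeomorphism, $X\cong X^{k}$; conversely, if $X\cong X^{k}$ then $X$ is homeomorphic to the underlying space of a $\mathbf{K}$-reflection, which is a $\mathbf{K}$-space by definition, so $X$ is a $\mathbf{K}$-space because $\mathbf{K}$ is closed with respect to homeomorphisms. The only step requiring genuine care is $(2)\Rightarrow(1)$, and within it the claim that $\mu$ is not merely a continuous bijection but an embedding, i.e. that $\mu$ sends open sets to relatively open sets; everything else is bookkeeping, and the substantive input — that $P_{H}(\mathrm{K}^{d}_{c}(X))$ lies in $\mathbf{K}$ — is already packaged into the adequacy hypothesis.
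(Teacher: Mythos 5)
Your proof is correct; the paper itself states this corollary without proof (it is cited from Xu's paper \cite{Xiaoquan20}), and your argument is exactly the standard one there: the only nontrivial direction is $(2)\Rightarrow(1)$, handled by showing the canonical map $x\mapsto\overline{\{x\}}$ into $P_{H}(\mathrm{K}^{d}_{c}(X))$ is a homeomorphism when $\mathrm{K}^{d}_{c}(X)=S_{c}(X)$ and then invoking adequacy plus closure under homeomorphisms. All the individual steps (the inclusion $S_{c}(X)\subseteq\mathrm{K}^{d}_{c}(X)$, applying $\mathbf{K}$-determinedness to $\mathrm{id}_{X}$, and the openness of the embedding via $\mu(U)=\diamondsuit U$) check out.
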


\begin{definition}\cite{Davey02}\label{6}
Let $P$ be an ordered set. A map $c:P\rightarrow P$ is called a \emph{closure operator}(on $P$) if, for all $x,y\in P$,
\begin{enumerate}[(1)]
\item $x\leq y \Longrightarrow c(x)\leq c(y)$ ($c$ is monotone),
\item $x\leq c(x)$ ($c$ is expansive),
\item $c(c(x))=c(x)$ ($c$ is idempotent).
\end{enumerate}
\end{definition}

We shall use $K(X)$ to denote the set of all nonempty compact
saturated subsets of $X$ ordered by reverse inclusion, that is, for $K_{1},K_{2}\in K(X)$, $K_{1}\leq_{K(X)}K_{2}$ iff $K_{2}\subseteq K_{1}$. The space $P_{S}(K(X))$ denoted shortly by $P_{S}(X)$, is called the \emph{Smyth power space} or \emph{upper space} of $X$ (\cite{Heckmann}).

\begin{lemma}\cite{X Jia}\label{9}
Let $X$ be an $T_{0}$ space. If $\mathcal{K}\in K(P_{S}(X))$, then $\bigcup \mathcal{K}\in K(X)$.
\end{lemma}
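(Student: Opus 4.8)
The plan is to verify directly that $\bigcup\mathcal{K}$ meets the three requirements for membership in $K(X)$: it is nonempty, saturated, and compact. The first two are immediate and the third carries all the content.

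First I would dispose of the easy parts. Since $\mathcal{K}\in K(P_S(X))$ is in particular nonempty, it contains at least one $K$, and as every element of $K(X)$ is nonempty, $\bigcup\mathcal{K}\neq\emptyset$. Moreover each member of $\mathcal{K}$ lies in $K(X)$, hence is a saturated subset of $X$, that is, an upper set in the specialization order $\leq_{X}$; a union of upper sets is an upper set, so $\bigcup\mathcal{K}$ is saturated.

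The core step is compactness. Let $\{U_i\}_{i\in I}\subseteq\mathcal{O}(X)$ be an open cover of $\bigcup\mathcal{K}$. For each $K\in\mathcal{K}$ we have $K\subseteq\bigcup\mathcal{K}\subseteq\bigcup_{i\in I}U_i$, so compactness of $K$ gives a finite $I_K\subseteq I$ with $K\subseteq U_K:=\bigcup_{i\in I_K}U_i$; each $U_K$ is open, so $\Box U_K$ is a basic open set of $P_S(X)=P_S(K(X))$, and $K\in\Box U_K$. Thus $\{\Box U_K\mid K\in\mathcal{K}\}$ is an open cover of $\mathcal{K}$, and compactness of $\mathcal{K}$ in $P_S(X)$ yields $K_1,\dots,K_n\in\mathcal{K}$ with $\mathcal{K}\subseteq\bigcup_{j=1}^{n}\Box U_{K_j}$. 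Then $J:=\bigcup_{j=1}^{n}I_{K_j}$ is finite, and I claim $\{U_i\mid i\in J\}$ covers $\bigcup\mathcal{K}$: if $x\in\bigcup\mathcal{K}$, choose $K\in\mathcal{K}$ with $x\in K$; since $K\in\Box U_{K_j}$ for some $j$, we get $K\subseteq U_{K_j}$, whence $x\in U_{K_j}=\bigcup_{i\in I_{K_j}}U_i\subseteq\bigcup_{i\in J}U_i$. This is a finite subcover, so $\bigcup\mathcal{K}$ is compact, and therefore $\bigcup\mathcal{K}\in K(X)$.

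I do not expect a genuine obstacle; the only points that need a moment's care are recognizing that each $\Box U_K$ is open in the upper Vietoris topology on $K(X)$ (so that compactness of $\mathcal{K}$ applies to the family $\{\Box U_K\}$), and that a finite subcover of $\mathcal{K}$ converts, through the finitely many finite index sets $I_{K_j}$, into a finite subcover of $\bigcup\mathcal{K}$.
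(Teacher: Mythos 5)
Your proof is correct; the paper states this lemma as a citation to Jia and Jung without reproducing a proof, and your argument (pushing a finite subcover of each $K$ into a basic open $\Box U_K$ of the upper Vietoris topology and then using compactness of $\mathcal{K}$ in $P_S(X)$) is exactly the standard argument from that source. The saturation and nonemptiness steps are also handled correctly, using the paper's stated equivalence between saturated sets and upper sets in the specialization order.
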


\begin{corollary}\cite{X Jia}\label{8}
For  a $T_{0}$ space $X$, the mapping $\bigcup:P_{S}(P_{S}(X))\rightarrow P_{S}(X)$, $\mathcal{K}\mapsto \bigcup\mathcal{K}$, is continuous.
\end{corollary}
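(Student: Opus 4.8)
The plan is to check continuity directly on a base of the codomain. Recall that $P_{S}(X)=P_{S}(K(X))$ carries the upper Vietoris topology, a base for which is $\{\Box U\mid U\in\mathcal{O}(X)\}$, where $\Box U=\{K\in K(X)\mid K\subseteq U\}$; analogously, $P_{S}(P_{S}(X))=P_{S}(K(P_{S}(X)))$ has as a base the family $\{\Box\mathcal{U}\mid\mathcal{U}\in\mathcal{O}(P_{S}(X))\}$, where $\Box\mathcal{U}=\{\mathcal{K}\in K(P_{S}(X))\mid\mathcal{K}\subseteq\mathcal{U}\}$. Since a map into a topological space is continuous as soon as the preimages of the members of a fixed base are open, it suffices to show that the preimage of $\Box U$ under the union map is open in $P_{S}(P_{S}(X))$ for every $U\in\mathcal{O}(X)$.

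First, the map is well defined: by Lemma \ref{9}, for every $\mathcal{K}\in K(P_{S}(X))$ we have $\bigcup\mathcal{K}\in K(X)$, so $\bigcup\mathcal{K}$ is a genuine point of $P_{S}(X)$ and the condition ``$\bigcup\mathcal{K}\in\Box U$'' is meaningful. Now fix $U\in\mathcal{O}(X)$. An element $\mathcal{K}\in K(P_{S}(X))$ is in particular a family of nonempty compact saturated subsets of $X$, and $\bigcup\mathcal{K}=\bigcup_{K\in\mathcal{K}}K$; hence $\bigcup\mathcal{K}\subseteq U$ if and only if $K\subseteq U$ for every $K\in\mathcal{K}$, that is, if and only if $\mathcal{K}\subseteq\Box U$. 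Therefore
\[
\{\mathcal{K}\in K(P_{S}(X))\mid \textstyle\bigcup\mathcal{K}\in\Box U\}=\{\mathcal{K}\in K(P_{S}(X))\mid\mathcal{K}\subseteq\Box U\}=\Box(\Box U).
\]
Since $\Box U$ is (basic) open in $P_{S}(X)$, the set $\Box(\Box U)$ is a basic open subset of $P_{S}(P_{S}(X))$, so the union map is continuous.

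There is no substantial obstacle here; the proof is a short computation. The two points one must not overlook are: (i) invoking Lemma \ref{9}, so that $\bigcup\mathcal{K}$ really lies in $K(X)$, which is what legitimizes writing $\bigcup\mathcal{K}\in\Box U$; and (ii) the elementary set-theoretic equivalence $\bigcup\mathcal{K}\subseteq U\iff\mathcal{K}\subseteq\Box U$, which is exactly what makes the preimage coincide with a single basic open box of the upper Vietoris topology. It is also worth remembering that it suffices to test preimages of the basic opens $\Box U$ rather than of all open subsets of $P_{S}(X)$.
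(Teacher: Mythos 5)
Your proof is correct; the paper cites this corollary from Jia--Jung without reproducing a proof, and your argument is the standard one: well-definedness via Lemma \ref{9}, plus the identity $\bigcup^{-1}(\Box U)=\Box(\Box U)$ coming from the equivalence $\bigcup\mathcal{K}\subseteq U\iff\mathcal{K}\subseteq\Box U$. Nothing is missing.
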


\section{On the two questions about strong $d$-spaces}
In order to uncover more finer links between $d$-spaces and well-filtered spaces, the concept of strong $d$-spaces was introduced in \cite{Zhao Xu}. And in \cite{Xiaoquan}, Xu and Zhao posed the following two questions:

{\bf Question 1}: Is the product space of arbitrary family of strong $d$-spaces again a strong $d$-space?

{\bf Question 2}: Is $\mathbf{S}$-$\mathbf{Top}_{d}$ reflective in $\mathbf{Top}_{0}$?

In this Section, we will give negative answers to the above questions.

\begin{definition}(\cite{Zhao Xu,Xiaoquan})
A $T_{0}$ space $X$ is called a \emph{strong $d$-space} if for any directed subset $D$ of $X$, $x\in X$ and $U\in \mathcal{O}(X)$, $\bigcap_{d\in D}\ua d\cap \ua x\subseteq U$ implies $\ua d\cap \ua x\subseteq U$ for some $d\in D$. The category of all strong $d$-spaces with continuous mappings is denoted by $\mathbf{S}$-$\mathbf{Top}_{d}$.
\end{definition}

First, we supply a negative answer to Question 1.

\begin{example}\label{a}
 Let $L=\mathbb{N} \cup \{a_{n}\mid n\in \mathbb{N}\}\cup\{\beta_{n}\mid n\in \mathbb{N}\}\cup \{\omega, a\}
  $, where $\mathbb{N}=\{1,2,3,\cdots\}$. We define an order $\leq$ on $L$ as follows:

   $x\leq y$ if and only if:

  $\bullet$ $x\leq y$ in $\mathbb{N}$;

  $\bullet$ $x\in \mathbb{N}\cup\{\omega\}$, $y=\omega$;

  $\bullet$ $x=a$, $y\in \{\beta_{n}\mid n\in \mathbb{N}\}\cup \{a_{n}\mid n\in \mathbb{N}\}$;

  $\bullet$ $x\in \mathbb{N}$, $y\in\{a_{n}\mid n\geq x\}$;

  $\bullet$ $x\in \mathbb{N}\cup \{\omega\}$, $y\in \{\beta_{n}\mid n\in \mathbb{N}\}$.

  Then $L$ can be easily depicted as in Fig. 1. Let $\tau_{1}=\{U\in \sigma(L)\mid \{a_{n}\mid n\in \mathbb{N}\}\backslash U~\mathrm{is}~\mathrm{finite}\}$ and $\tau_{2}=\{U\in \sigma(L)\mid U\subseteq \{a_{n}\mid n\in \mathbb{N}\}\}$. Define $\tau_{L}=\tau_{1}\cup \tau_{2}$. Then $(L,\tau_{L})$ is a strong $d$-space. However, $L\times L$ is not a strong $d$-space.
\end{example}

  \begin{figure}[H]
\centering
\begin{tikzpicture}[scale=1.0]
\path (0,1)   node[left] {1} coordinate (a);
\fill (a) circle (1pt);
\path (0,2)   node[left] {2} coordinate (b);
\fill (b) circle (1pt);
\path (0,3)   node[left] {3} coordinate (c);
\fill (c) circle (1pt);
\path (0,6)   node[right] {$\omega$} coordinate (d);
\fill (d) circle (1pt);
\path (-1,7)  node[above] {$\beta_{1}$} coordinate (e);
\fill (e) circle (1pt);
\path (0,7)  node[above] {$\beta_{2}$} coordinate (f);
\fill (f) circle (1pt);
\path (1,7)  node[above] {$\beta_{3}$} coordinate (g);
\fill (g) circle (1pt);
\path (1.2,7)  node[right] {$$} coordinate (n);
\fill (n) circle (0.1pt);
\path (2.2,7)  node[right] {$$} coordinate (o);
\fill (o) circle (0.1pt);
\path (3,7)  node[above] {$a_{1}$} coordinate (h);
\fill (h) circle (1pt);
\path (4,7)  node[above] {$a_{2}$} coordinate (i);
\fill (i) circle (1pt);
\path (5,7)  node[above] {$a_{3}$} coordinate (j);
\fill (j) circle (1pt);
\path (4,1)  node[right] {$a$} coordinate (k);
\fill (k) circle (1pt);
\path (5.5,7)  node[right] {$$} coordinate (l);
\fill (l) circle (0.1pt);
\path (7,7)  node[right] {$$} coordinate (m);
\fill (m) circle (0.1pt);
\draw  (a) -- (b)  (b) -- (c) (d) -- (e) (d) -- (f) (d) -- (g) (a) -- (h) (b) -- (i) (c) -- (j) (k) -- (e) (k) -- (f) (k) -- (g) (k) -- (h) (k) -- (i) (k) -- (j);
\draw[densely dashed] (c)--(d) (l)--(m) (n)--(o);
\end{tikzpicture}
\\Fig. 1. $L$.
\end{figure}

  \begin{proof}
   {\bf Claim 1}: $L$ is a $dcpo$.

   It is easy to verify that $L$ is a $dcpo$.

   {\bf Claim 2}: $\tau_{L}$ is a topology and $\nu(L)\subseteq \tau_{L}\subseteq \sigma(L)$.

    Obviously, $\emptyset,L\in \tau_{L}$. Let $U,V\in \tau_{L}$. If $U,V\in \tau_{1}$, then $U\cap V\in \sigma(L)$ and $\{a_{n}{\mid} n\in \mathbb{N}\}\backslash (U\cap V)=(\{a_{n}{\mid} n\in \mathbb{N}\}\backslash U)\cup (\{a_{n}{\mid} n\in \mathbb{N}\}\backslash V)$ is finite since $U,V\in \tau_{1}$. This means that $U\cap V\in \tau_{1}\subseteq \tau_{L}$. If $U\in \tau_{1},V\in \tau_{2}$, then $U\cap V\in \tau_{2}$. If $U,V\in \tau_{2}$, then obviously, $U\cap V\in \tau_{2}$. Assume $(U_{i})_{i\in I}\subseteq \tau_{L}$. If there exists $i_{0}\in I$ such that $i_{0}\in \tau_{1}$, then $\bigcup_{i\in I}U_{i}\in \tau_{1}$. Else, $(U_{i})_{i\in I}\subseteq \tau_{2}$, then we have $\bigcup_{i\in I}U_{i}\in \tau_{2}$. Hence, $\tau_{L}$ is a topology.

     It is straightforward to check that $\nu(L)\subseteq \tau_{L}\subseteq \sigma(L)$.

     Thus we have that $(L,\tau_ L)$ is a $d$-space.

   {\bf Claim 3}: $(L,\tau_{L})$ is a strong $d$-space.

    It suffices to prove that for any infinite chain $C$ in $\mathbb{N}$, $x\in L$ and $U\in \tau_{L}$, $\bigcap_{c\in C}\ua c\cap \ua x\subseteq U$ implies that there exists $c\in C$ such that $\ua c\cap\ua x\subseteq U$. Now we need to distinguish the following four cases:

    Case 1, $x\in \mathbb{N}\cup\{\omega\}$. Then $\bigcap_{c\in C}\ua c\cap \ua x=\ua \omega \cap \ua x=\ua \omega\subseteq U$. Since $U$ is Scott open, there exists an element $c\in C$ such $c\in U$. This implies that $\ua c\cap \ua x\subseteq \ua c \subseteq U$.

    Case 2, $x\in \{\beta_{n} \mid n\in \mathbb{N}\}$. Without loss of generality, suppose $x=\beta_{n}$. Then $\bigcap_{c\in C}\ua c\cap \ua x=\ua \omega\cap \ua \beta_{n}=\{\beta_{n}\}\subseteq U$. And we find that for any $c\in C$, $\ua c\cap \ua \beta_{n}=\{\beta_{n}\}\subseteq U$.

    Case 3, $x\in \{a_{n}\mid n\in \mathbb{N}\}$. Assume $x=a_{n}$. Then $\bigcap_{c\in C}\ua c\cap \ua x=\ua \omega\cap \ua a_{n}=\emptyset$. Note that for any $n$, there exists an element $c\in C$ such that $n<c$, then $\ua c\cap \ua a_{n}=\emptyset\subseteq U$.

    Case 4, $x=a$. Then $\bigcap_{c\in C}\ua c\cap \ua x=\ua \omega \cap \ua a=\{\beta_{n}\mid n\in \mathbb{N}\}\subseteq U$. This implies that $U\in \tau_{1}$. It follows that there exists $n\in \mathbb{N}$ such that $\ua n \cap \ua a\subseteq U$. For $n$, since $C$ is infinite, there is an element $c\in C$ such that $n\leq c$. Therefore, $\ua c\cap \ua a \subseteq\ua n \cap \ua a\subseteq U$.

   {\bf Claim 4}: $L\times L$ is not a strong $d$-space.

    Let $U_{n}=\{\beta_{m}\mid m\leq n\}\cup\{a_{i}\mid i\geq n\}$. Obviously, $U_{n}\in \tau_{L}$. Let $W=\bigcup_{n\in\mathbb{N}}(U_{n}\times U_{n})\cup\bigcup_{n\in \mathbb{N}}(U_{n}\times \{a_{m}\mid m\leq n\})$. It is easy to show that $W$ is open in $L\times L$. We want to prove that $\bigcap_{n\in \mathbb{N}}\ua (n,1)\cap \ua (a,a)=\ua(\omega,1)\cap \ua (a,a)=\{\beta_{n}\mid n\in \mathbb{N}\}\times (\{\beta_{n}\mid n\in \mathbb{N}\}\cup\{a_{n}\mid n\in \mathbb{N}\})\subseteq W$. For any $m,n\in \mathbb{N}$, if $n\leq m$, then $(\beta_{n},\beta_{m})\in U_{m}\times U_{m}\subseteq W$, $(\beta_{n},a_{m})\in U_{n}\times U_{n}\subseteq W$. Else $n>m$, then $(\beta_{n},\beta_{m})\in U_{n}\times U_{n}\subseteq W$, $(\beta_{n},a_{m})\in U_{n}\times \{a_{i}\mid i\leq n\}\subseteq W$. Assume $L\times L$ is a strong $d$-space. Then there exists $n_{0}\in \mathbb{N}$ such that $\ua (n_{0},1)\cap \ua (a,a)=(\{\beta_{n}\mid n\in \mathbb{N}\}\cup \{a_{m}\mid m\geq n_{0}\})\times (\{\beta_{n}\mid n\in \mathbb{N}\}\cup \{a_{n}\mid n\in \mathbb{N}\})\subseteq W$, which contradicts the fact that $(a_{n_{0}},\beta_{n_{0}+1})\notin W$. Thus $L\times L$ is not a strong $d$-space.
 \end{proof}

In the following, we provide a negative answer to Question 2. We show that the category $\mathbf{S}$-$\mathbf{Top}_{d}$ is not a reflective subcategory of $\mathbf{Top}_{0}$.

A full subcategory $\mathcal{D}$ of a category $\mathcal{C}$ is called \emph{reflective} if the inclusion functor has a left
adjoint, which then is called a reflector(\cite{Barr}).

\begin{definition}\label{-reflection}
Let $X$ be a $T_{0}$ space. A  \emph{strong $d$-reflection} of $X$ is a pair $\langle \widehat{X},\mu\rangle$ consisting of a strong $d$-space $\widehat{X}$ and a continuous mapping $\mu:X\rightarrow \widehat{X}$ satisfying that for any continuous mapping $f:X\rightarrow Y$ to a strong $d$-space, there exists a unique continuous mapping $f^{\ast}:\widehat{X}\rightarrow Y$ such that $f^{\ast}\circ \mu= f$, that is, the following diagram commutes.
\end{definition}
\centerline{\xymatrix{X \ar[dr]_{f} \ar[r]^{\mu}
                & \widehat{X} \ar[d]^{f^{\ast}}  \\
                &  Y }}

By a standard argument,  strong $d$-reflections, if they exist, are unique to homeomorphisms. We shall use $X^{s-d}$ to indicate the space of the strong $d$-reflection of $X$ if it exists.

\begin{example}\label{c}
  Let $P=\mathbb{N} \cup \{a_{n}\mid n\in \mathbb{N}\}\cup \{\beta,\omega, a\}
  $, where $\mathbb{N}=\{1,2,3,\cdots\}$. We define an order $\leq$ on $P$ as follows:

   $x\leq y$ if and only if:

  $\bullet$ $x\leq y$ in $\mathbb{N}$;

  $\bullet$ $x\in \mathbb{N}\cup\{\omega\}$, $y=\omega$;

  $\bullet$ $x=a$, $y\in \{\beta\}\cup \{a_{n}\mid n\in \mathbb{N}\}$;

  $\bullet$ $x\in \mathbb{N}$, $y\in\{a_{n}\mid n\geq x\}$;

  $\bullet$ $x\in \mathbb{N}\cup \{\omega\}$, $y\in \{\beta\}$.

Then $P$ can be easily depicted as in Fig. 2. Define $\tau_{P}=\{U{\in} \sigma(P){\mid } \{a_{n}{\mid} n\in \mathbb{N}\}\backslash U ~\mathrm{is} ~\mathrm{finite}\}\cup\{\emptyset\}$. Then $(P,\tau_{P})$ is a strong $d$-space.
\begin{proof}
  The proof is similar to that of Example \ref{a}.
\end{proof}
\end{example}
\begin{figure}[H]
\centering
\begin{tikzpicture}[scale=1.0]
\path (0,1)   node[left] {1} coordinate (a);
\fill (a) circle (1pt);
\path (0,2)   node[left] {2} coordinate (b);
\fill (b) circle (1pt);
\path (0,3)   node[left] {3} coordinate (c);
\fill (c) circle (1pt);
\path (0,6)   node[left] {$\omega$} coordinate (d);
\fill (d) circle (1pt);
\path (0,7)  node[left] {$\beta$} coordinate (f);
\fill (f) circle (1pt);
\path (3,7)  node[right] {$a_{1}$} coordinate (h);
\fill (h) circle (1pt);
\path (4,7)  node[right] {$a_{2}$} coordinate (i);
\fill (i) circle (1pt);
\path (5,7)  node[right] {$a_{3}$} coordinate (j);
\fill (j) circle (1pt);
\path (4,1)  node[right] {$a$} coordinate (k);
\fill (k) circle (1pt);
\path (5.5,7)  node[right] {$$} coordinate (l);
\fill (l) circle (0.1pt);
\path (7,7)  node[right] {$$} coordinate (m);
\fill (m) circle (0.1pt);
\draw  (a) -- (b)  (b) -- (c) (d) -- (f) (a) -- (h) (b) -- (i) (c) -- (j) (k) -- (f) (k) -- (h) (k) -- (i) (k) -- (j);
\draw[densely dashed] (c)--(d) (l)--(m);
\end{tikzpicture}
\\Fig. 2. $P$.
\end{figure}
\begin{example}\label{b}
  Let $Q=\mathbb{N} \cup \{a_{n}\mid n\in \mathbb{N}\}\cup \{\omega, a\}
  $, where $\mathbb{N}=\{1,2,3,\cdots\}$. We define an order $\leq$ on $P$ as follows:

   $x\leq y$ if and only if:

  $\bullet$ $x\leq y$ in $\mathbb{N}$;

  $\bullet$ $x\in \mathbb{N}\cup\{\omega\}$, $y=\omega$;

  $\bullet$ $x=a$, $y\in \{a_{n}\mid n\in \mathbb{N}\}$;

  $\bullet$ $x\in \mathbb{N}$, $y\in\{a_{n}\mid n\geq x\}$.

Then $Q$ can be easily depicted as in Fig. 3. Define $\tau_{Q}{=}\{U{\in} \sigma(Q){\mid} \{a_{n}{\mid} n\in \mathbb{N}\}\backslash U ~\mathrm{is} ~\mathrm{finite}\}\cup\{\emptyset\}$. Then $(Q,\tau_{Q})$ is not a strong $d$-space.
\begin{proof}
  Note that $\bigcap_{n\in \mathbb{N}}\ua n\cap \ua a=\ua \omega \cap \ua a=\emptyset$. But $\ua a\cap \ua n\neq \emptyset$ for any $n\in \mathbb{N}$. Hence, $(Q,\tau_{Q})$ is not a strong $d$-space.
\end{proof}
\end{example}
\begin{figure}[H]
\centering
\begin{tikzpicture}[scale=1.0]
\path (0,1)   node[left] {1} coordinate (a);
\fill (a) circle (1pt);
\path (0,2)   node[right] {2} coordinate (b);
\fill (b) circle (1pt);
\path (0,3)   node[right] {3} coordinate (c);
\fill (c) circle (1pt);
\path (0,7)   node[right] {$\omega$} coordinate (d);
\fill (d) circle (1pt);

\path (3,7)  node[right] {$a_{1}$} coordinate (h);
\fill (h) circle (1pt);
\path (4,7)  node[right] {$a_{2}$} coordinate (i);
\fill (i) circle (1pt);
\path (5,7)  node[right] {$a_{3}$} coordinate (j);
\fill (j) circle (1pt);
\path (4,1)  node[right] {$a$} coordinate (k);
\fill (k) circle (1pt);
\path (5.5,7)  node[right] {$$} coordinate (l);
\fill (l) circle (0.1pt);
\path (7,7)  node[right] {$$} coordinate (m);
\fill (m) circle (0.1pt);
\draw  (a) -- (b)  (b) -- (c) (a) -- (h) (b) -- (i) (c) -- (j) (k) -- (h) (k) -- (i) (k) -- (j);
\draw[densely dashed] (c)--(d) (l)--(m);
\end{tikzpicture}
\\Fig. 3. $Q$.
\end{figure}

\begin{example}\label{d}
Let $M=\mathbb{N} \cup \{a_{n}\mid n\in \mathbb{N}\}\cup \{\alpha,\beta,\omega, a\}$, where $\mathbb{N}=\{1,2,3,\cdots\}$. We define an order $\leq$ on $M$ as follows:

   $x\leq y$ if and only if:

  $\bullet$ $x\leq y$ in $\mathbb{N}$;

  $\bullet$ $x\in \mathbb{N}\cup\{\omega\}$, $y=\omega$;

  $\bullet$ $x=a$, $y\in \{\beta,\alpha\}\cup \{a_{n}\mid n\in \mathbb{N}\}$;

  $\bullet$ $x\in \mathbb{N}$, $y\in\{a_{n}\mid n\geq x\}$;

  $\bullet$ $x\in \mathbb{N}\cup \{\omega\}$, $y\in \{\beta,\alpha\}$.
Then $M$ can be easily depicted as in Fig. 4. Define $\tau_{M}=\{U\in \sigma(M)\mid \{a_{n}\mid n\in \mathbb{N}\}\backslash U ~\mathrm{is} ~\mathrm{finite}\}\cup\{\emptyset\}$. Then $(M,\tau_{M})$ is a strong $d$-space.
\begin{proof}
 The proof is similar to that of Example \ref{a}.
\end{proof}
\end{example}
\begin{figure}[H]
\centering
\begin{tikzpicture}[scale=1.0]
\path (0,1)   node[left] {1} coordinate (a);
\fill (a) circle (1pt);
\path (0,2)   node[left] {2} coordinate (b);
\fill (b) circle (1pt);
\path (0,3)   node[left] {3} coordinate (c);
\fill (c) circle (1pt);
\path (0,6)   node[right] {$\omega$} coordinate (d);
\fill (d) circle (1pt);
\path (-1,7)  node[right] {$\beta$} coordinate (f);
\fill (f) circle (1pt);
\path (1,7)  node[right] {$\alpha$} coordinate (g);
\fill (g) circle (1pt);
\path (3,7)  node[right] {$a_{1}$} coordinate (h);
\fill (h) circle (1pt);
\path (4,7)  node[right] {$a_{2}$} coordinate (i);
\fill (i) circle (1pt);
\path (5,7)  node[right] {$a_{3}$} coordinate (j);
\fill (j) circle (1pt);
\path (4,1)  node[right] {$a$} coordinate (k);
\fill (k) circle (1pt);
\path (5.5,7)  node[right] {$$} coordinate (l);
\fill (l) circle (0.1pt);
\path (7,7)  node[right] {$$} coordinate (m);
\fill (m) circle (0.1pt);
\draw  (a) -- (b)  (b) -- (c) (d) -- (f) (d) -- (g) (a) -- (h) (b) -- (i) (c) -- (j) (k) -- (f) (k) -- (h) (k) -- (i) (k) -- (j) (k) -- (g);
\draw[densely dashed] (c)--(d) (l)--(m);
\end{tikzpicture}
\\Fig. 4. $M$.
\end{figure}

\begin{theorem}\label{strong d reflection}
 There exists a $T_{0}$ space $X$ admitting no strong $d$-reflection.
\begin{proof}
Let $Q$ be the space in Example \ref{b}. From Example \ref{b} and Example \ref{c}, we have $(Q,\tau_{Q})\notin \mathbf{S}$-$\mathbf{Top}_{d}$ and $(P,\tau_{P})\in \mathbf{S}$-$\mathbf{Top}_{d}$. We show that the strong $d$-reflection of $Q$ does not exist below.

Assume for the contradiction that the strong $d$-reflection of $Q$ exists. Let $\langle Q^{s-d}, \alpha_{Q}\rangle$ be the strong $d$-reflection. Define $f:(Q,\tau_{Q})\rightarrow (P,\tau_{P})$ by $f(x)=x$ for any $x\in Q$. It is obvious that $f$ is continuous. This implies that there exists a unique continuous mapping $f^{*}: Q^{s-d}\rightarrow (P,\tau_{P})$ such that $f=f^{*}\circ \alpha_{Q}$ by the definition of strong $d$-reflection. i.e., the following diagram commutes:

\centerline{\xymatrix{Q \ar[dr]_{f} \ar[r]^{\alpha_{Q}}
                & Q^{s-d} \ar[d]^{f^{*}}  \\
                &  P }}

We will prove that $(P,\tau_{P})$ and $Q^{s-d}$ are homeomorphic. Since $(Q,\tau_{Q})$ is a $d$-space and $\alpha_{Q}$ is continuous, we have $\{\alpha_{Q}(n)\}_{n\in \mathbb{N}}$ is a chain and $\alpha_{Q}(\omega)=\sup_{n\in \mathbb{N}}\alpha_{Q}(n)$.

    {\bf Claim 1}: $\ua \alpha_{Q}(\omega)\cap \ua \alpha_{Q}(a)\cap cl(\alpha_{Q}(Q))=\bigcap_{n\in \mathbb{N}}\ua \alpha_{Q}(n)\cap \ua \alpha_{Q}(a)\cap cl(\alpha_{Q}(Q))\neq \emptyset$.

    Suppose that $\ua \alpha_{Q}(\omega)\cap \ua \alpha_{Q}(a)\cap cl(\alpha_{Q}(Q))=\emptyset$. This means that $\bigcap_{n\in \mathbb{N}}\ua \alpha_{Q}(n)\cap \ua \alpha_{Q}(a)\subseteq Q^{s-d}\backslash cl(\alpha_{Q}(Q))$. It follows that there exists $n_{0}\in \mathbb{N}$ such that $\ua \alpha_{Q}(n_{0})\cap \ua \alpha_{Q}(a)\subseteq Q^{s-d}\backslash cl(\alpha_{Q}(Q))$ since $Q^{s-d}$ is a strong $d$-space, which contradicts $\alpha_{Q}(a_{n_{0}})\in \ua \alpha_{Q}(n_{0})\cap \ua \alpha_{Q}(a)\cap \alpha_{Q}(Q) $.

    Pick $b\in \ua \alpha_{Q}(\omega)\cap \ua \alpha_{Q}(a)\cap cl(\alpha_{Q}(Q))=\bigcap_{n\in \mathbb{N}}\ua \alpha_{Q}(n)\cap \ua \alpha_{Q}(a)\cap cl(\alpha_{Q}(Q)) $.

    {\bf Claim 2}: $f^{*}(b)=\beta$.

    Assume that there exists $x\in Q$ such that $f^{*}(b)=x$. By Claim 1, we have $\alpha_{Q}(\omega),\alpha_{Q}(a)\leq b$, this implies that $f^{*}(\alpha_{Q}(\omega))=f(\omega)=\omega\leq f^{*}(b)=x$ and $f^{*}(\alpha_{Q}(a))=f(a)=a\leq f^{*}(b)=x$ which contradicts that $\{\omega, a\}^{u}\cap Q=\emptyset$, where $\{\omega,a\}^{u}$ represents the set of all upper bound of $\{\omega,a\}$. So we have $f^{*}(b)=\beta$. Hence, $f^{*}$ is a surjection.

    Define $\alpha_{Q}^{*}:(P,\tau_{P})\rightarrow Q^{s-d}$ by
   \[ \alpha_{Q}^{*}(x)=
   \begin{cases}
\alpha_{Q}(x)& \mbox{if } x\in Q; \\
b & \mbox{if } x=\beta. \\
\end{cases}\]

  {\bf Claim 3}: $\alpha_{Q}^{*}$ is continuous.

  For any $U\in \mathcal{O}(Q^{s-d})$, there are two cases to consider:

  Case 1, $b\in U$. Then $(\alpha_{Q}^{*})^{-1}(U)=\{\beta\}\cup (\alpha_{Q})^{-1}(U)$. By the continuity of $\alpha_{Q}$, we know $(\alpha_{Q})^{-1}(U){\in} \tau_{Q}$. This implies that $\{a_{n}{\mid }n\in \mathbb{N}\}\backslash (\alpha_{Q})^{-1}(U) ~\mathrm{is}~ \mathrm{finite}$. So $\{a_{n}\mid n\in \mathbb{N}\}\backslash (\{\beta\}\cup (\alpha_{Q})^{-1}(U)) ~\mathrm{is}~ \mathrm{finite}$. To prove $(\alpha_{Q}^{*})^{-1}(U)\in \tau_{P}$,  it remains to prove that $\{\beta\}\cup (\alpha_{Q})^{-1}(U)$ is Scott open in $P$. Since $\ (\alpha_{Q})^{-1}(U)$ is Scott open in $Q$, we have $\{\beta\}\cup (\alpha_{Q})^{-1}(U)$ is Scott open in $P$.

  Case 2, $b\notin U$. Then $(\alpha_{Q}^{*})^{-1}(U)=(\alpha_{Q})^{-1}(U)$. To prove $(\alpha_{Q}^{*})^{-1}(U)\in \tau_{P}$, it remains to prove that $(\alpha_{Q})^{-1}(U)$ is an upper set. Let $x\leq y\in P$, $x\in (\alpha_{Q})^{-1}(U)$. If $y\in Q$, then $y\in (\alpha_{Q})^{-1}(U)$ since $(\alpha_{Q})^{-1}(U)$ is an upper set of $Q$. Otherwise, $y=\beta$. This implies that $x\in \da \beta\backslash \{\beta\}=\{a,\omega\}\cup \mathbb{N}$. It follows that $b\geq \alpha_{Q}(x)\in U$. Thus $b\in U$, which contradicts $b\notin U$.

  From the definition of $\alpha_{Q}^{*}$, we know $\alpha_{Q}=\alpha_{Q}^{*}\circ f$. Note that $f=f^{*}\circ \alpha_{Q}$. It follows that $\alpha_{Q}=\alpha_{Q}^{*}\circ f=\alpha_{Q}^{*}\circ f^{*}\circ \alpha_{Q}$ and $f=f^{*}\circ \alpha_{Q}=f^{*}\circ \alpha_{Q}^{*}\circ f$. Hence, $\alpha_{Q}^{*}\circ f^{*}=id_{Q^{s-d}}$ and $f^{*}\circ \alpha_{Q}^{*}=id_{P}$. By Claim 3, we have $(P,\tau_{P})$ and $Q^{s-d}$ are homeomorphic.

  {\bf Claim 4}: $Q$ admits no strong $d$-reflection.

  By Example \ref{d}, we have $(M,\tau_{M})\in\mathbf{S}$-$\mathbf{Top}_{d}$. Define $g:(Q,\tau_{Q})\rightarrow (M,\tau_{M})$ by $g(x)=x$ for any $x\in Q$. Obviously, $g$ is continuous. Let $i:(P,\tau_{P})\rightarrow (M,\tau_{M})$ be the inclusion map. Then $i$ is continuous. Define $h: (P,\tau_{P})\rightarrow (M,\tau_{M})$ by

    \begin{center}
  $$ h(x)=\left\{
\begin{aligned}
x &,&x\in Q; \\
\alpha &, & x=\beta. \\
\end{aligned}
\right.
$$
  \end{center}
  We need to prove that $h$ is continuous. For any $U\in \tau_{M}$, we need to distinguish among the four cases:

  Case 1, $\alpha,\beta\notin U$. Then $h^{-1}(U)=U\subseteq \{a_{n}\mid n\in \mathbb{N}\}$. This means that $U\in \tau_{P}$.

  Case 2, $\alpha\in U, \beta\notin U$. Then $h^{-1}(U)=(U\cup \{\beta\})\cap P$ and $U\subseteq \{\alpha\}\cup\{a_{n}\mid n\in \mathbb{N}\}$. Hence, $h^{-1}(U)\in \tau_{P}$.

  Case 3, $\alpha\notin U,\beta\in U$. Then $h^{-1}(U)=U\backslash\{\beta\}$ and $U\subseteq \{\beta\}\cup\{a_{n}\mid n\in \mathbb{N}\}$. It follows that $h^{-1}(U)\in \tau_{P}$.

  Case 4, $\alpha,\beta\subseteq U$. Then $h^{-1}(U)=U\backslash \{\alpha\}=U\cap P$. It is worth nothing that $P$ is a Scott closed subset of $M$, then we have $h^{-1}(U)\in \sigma(P)$. Therefore, $h^{-1}(U)\in \tau_{P}$.

  Since $g=i\circ f=h\circ f$, we have $i=h$, which contradicts $i\neq h$.
 \end{proof}
\end{theorem}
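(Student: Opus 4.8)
The plan is to argue by contradiction, exploiting the three spaces already in hand: $Q$ from Example~\ref{b}, which fails to be a strong $d$-space, and $P$, $M$ from Examples~\ref{c} and~\ref{d}, which are strong $d$-spaces. All three are built on the common skeleton $\mathbb{N}\cup\{a_{n}\mid n\in\mathbb{N}\}\cup\{\omega,a\}$; $P$ adjoins a single common upper bound $\beta$ of $\omega$ and $a$, while $M$ adjoins two, $\alpha$ and $\beta$. In particular the pointwise-identity maps $f\colon Q\to P$ and $g\colon Q\to M$ are continuous. Assuming $Q$ has a strong $d$-reflection $\langle Q^{s-d},\alpha_{Q}\rangle$, I would first show $Q^{s-d}\cong P$, and then force a contradiction by producing two distinct continuous maps out of $P$ that the universal property makes coincide.

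\textbf{Locating the repairing point.} Since $\alpha_{Q}$ is continuous and $Q$ is a $d$-space, $\{\alpha_{Q}(n)\}_{n\in\mathbb{N}}$ is a chain with supremum $\alpha_{Q}(\omega)$. Applying the strong $d$-condition in $Q^{s-d}$ to this directed set, to the point $\alpha_{Q}(a)$, and to the open set $Q^{s-d}\setminus\mathrm{cl}(\alpha_{Q}(Q))$, and noting that each $\alpha_{Q}(a_{n})$ lies in $\ua\alpha_{Q}(n)\cap\ua\alpha_{Q}(a)\cap\alpha_{Q}(Q)$, I obtain that $\ua\alpha_{Q}(\omega)\cap\ua\alpha_{Q}(a)\cap\mathrm{cl}(\alpha_{Q}(Q))$ is nonempty; fix a point $b$ in it. Next, factor $f$ as $f^{\ast}\circ\alpha_{Q}$. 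Because $b$ lies above $\alpha_{Q}(\omega)$ and $\alpha_{Q}(a)$, the point $f^{\ast}(b)$ lies above $f(\omega)=\omega$ and $f(a)=a$ in $P$; as $\omega$ and $a$ have no common upper bound in $Q$ but exactly one, namely $\beta$, in $P$, we get $f^{\ast}(b)=\beta$. Hence $f^{\ast}$ is surjective, since it already hits every $x\in Q$ via $f^{\ast}(\alpha_{Q}(x))=x$.

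\textbf{Building the inverse.} Define $\alpha_{Q}^{\ast}\colon P\to Q^{s-d}$ by $\alpha_{Q}^{\ast}|_{Q}=\alpha_{Q}$ and $\alpha_{Q}^{\ast}(\beta)=b$, and check continuity by splitting open sets $U$ of $Q^{s-d}$ into those containing $b$ and those not: in the first case one uses that $\{\beta\}\cup\alpha_{Q}^{-1}(U)$ stays Scott open (and $a_{n}$-cofinite) in $P$, and in the second that $b\notin U$ together with $b\in\mathrm{cl}(\alpha_{Q}(Q))$ forces $\alpha_{Q}^{-1}(U)$ to remain an upper (hence open) set in $P$ — any $x\leq\beta$ with $\alpha_{Q}(x)\in U$ would satisfy $\alpha_{Q}(x)\le b$, dragging $b$ into $U$. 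From $\alpha_{Q}^{\ast}\circ f=\alpha_{Q}$ and $f=f^{\ast}\circ\alpha_{Q}$ we get $\alpha_{Q}^{\ast}\circ f^{\ast}\circ\alpha_{Q}=\alpha_{Q}$, so the uniqueness clause of the universal property yields $\alpha_{Q}^{\ast}\circ f^{\ast}=\mathrm{id}_{Q^{s-d}}$; surjectivity of $f^{\ast}$ then gives $f^{\ast}\circ\alpha_{Q}^{\ast}=\mathrm{id}_{P}$, so $Q^{s-d}\cong P$ via $f^{\ast}$.

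\textbf{The payoff.} Play $P$ against $M$. The inclusion $i\colon P\hookrightarrow M$ is continuous, and so is the map $h\colon P\to M$ that is the identity on $Q$ but sends $\beta$ to $\alpha$; continuity of $h$ is a case check on open sets of $M$, the decisive point being that $P$ is a Scott-closed subspace of $M$. Both $i\circ f$ and $h\circ f$ equal $g$, so $i\circ f^{\ast}$ and $h\circ f^{\ast}$ are two continuous factorizations of $g\colon Q\to M$ through $\langle Q^{s-d},\alpha_{Q}\rangle$; uniqueness forces $i\circ f^{\ast}=h\circ f^{\ast}$, and since $f^{\ast}$ is a homeomorphism this gives $i=h$, contradicting $i(\beta)=\beta\neq\alpha=h(\beta)$. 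I expect the genuine difficulty to sit in the first and third steps: choosing $\mathrm{cl}(\alpha_{Q}(Q))$ as the right ``test set'' so that the repairing point $b$ exists at all, and then checking that adjoining $b$ as the image of $\beta$ preserves continuity — precisely the place where $b\in\mathrm{cl}(\alpha_{Q}(Q))$ is indispensable.
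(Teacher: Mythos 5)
Your proposal is correct and follows essentially the same route as the paper's proof: the same point $b\in\ua\alpha_{Q}(\omega)\cap\ua\alpha_{Q}(a)\cap\mathrm{cl}(\alpha_{Q}(Q))$ obtained from the strong $d$-condition, the same extension $\alpha_{Q}^{\ast}$ sending $\beta\mapsto b$ to show $Q^{s-d}\cong P$, and the same pair $i,h\colon P\to M$ forcing the contradiction. Your derivation of $f^{\ast}\circ\alpha_{Q}^{\ast}=\mathrm{id}_{P}$ from surjectivity of $f^{\ast}$ is in fact a slightly cleaner justification than the one written in the paper, but the argument is the same.
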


By Theorem \ref{strong d reflection}, we obtain the following theorem immediately.

\begin{theorem}
The category $\mathbf{S}$-$\mathbf{Top}_{d}$ is not a reflective subcategory of $\mathbf{Top}_{0}$.
\end{theorem}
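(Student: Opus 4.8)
The plan is to derive this from Theorem \ref{strong d reflection} via the standard categorical fact that a full subcategory is reflective exactly when every object of the ambient category admits a reflection into it. I would argue by contradiction. Suppose $\mathbf{S}$-$\mathbf{Top}_{d}$ were a reflective subcategory of $\mathbf{Top}_{0}$, so that the inclusion functor $\mathbf{S}$-$\mathbf{Top}_{d}\hookrightarrow\mathbf{Top}_{0}$ has a left adjoint $F$ with unit $\eta$. Then for every $T_{0}$ space $X$ the component $\eta_{X}\colon X\rightarrow F(X)$ is a universal arrow from $X$ to the inclusion; unravelling this universal property shows that $\langle F(X),\eta_{X}\rangle$ is precisely a strong $d$-reflection of $X$ in the sense of Definition \ref{-reflection}. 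Hence, under this hypothesis, every $T_{0}$ space would admit a strong $d$-reflection.

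But Theorem \ref{strong d reflection} exhibits a concrete $T_{0}$ space --- namely the space $Q$ of Example \ref{b} --- that admits no strong $d$-reflection. This contradicts the conclusion of the previous paragraph, and therefore $\mathbf{S}$-$\mathbf{Top}_{d}$ cannot be reflective in $\mathbf{Top}_{0}$.

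There is essentially no obstacle in this final step: the whole weight of the statement rests on Theorem \ref{strong d reflection} and on the constructions of $Q$, $P$ and $M$ in Examples \ref{b}, \ref{c} and \ref{d}. The only thing needed is the easy half of the adjoint/reflection correspondence, namely that a left adjoint to an inclusion yields, componentwise, a reflection of each object. (Conversely, since reflections are unique up to homeomorphism, objectwise reflections could be organised into a reflector, but only the easy direction is invoked here.)
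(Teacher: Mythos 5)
Your proposal is correct and matches the paper exactly: the paper states this theorem as an immediate consequence of Theorem \ref{strong d reflection}, relying on the same standard fact that a left adjoint to the inclusion would furnish a strong $d$-reflection of every $T_{0}$ space, contradicting the non-existence for the space $Q$. You have merely made explicit the routine adjoint/reflection correspondence that the paper leaves implicit.
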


\section{On some questions about super $\mathrm{H}$-sober spaces}
In this section, we will prove that for each irreducible subset system ($R$-subset
system for short) $\mathrm{H}$, property $M$ mentioned in \cite{Xiaoquan21} naturally holds. Based on this result, we get that Question 1.1$\sim$Question 1.7 hold. Furthermore, we generalize some results in \cite{Xiaoquan21}.

The category of all sets with mappings is denoted by $\mathbf{Set}$.
\begin{definition}\cite{Xiaoquan21}
A covariant functor $\mathrm{H}:\mathbf{Top}_{0}\longrightarrow \mathbf{Set}$ is called a \emph{subset system} on $\mathbf{Top}_{0}$ provided that the following two conditions are satisfied:
\begin{enumerate}[(1)]
\item $S(X)\subseteq \mathrm{H}(X)\subseteq 2^{X}$ (the set of all subsets of X) for each $X\in ob(\mathbf{Top}_{0}$).
\item For any continuous mapping $f:X\rightarrow Y$ in $\mathbf{Top}_{0}$, $\mathrm{H}(f)(A)=f(A)\in \mathrm{H}(Y)$ for all $A\in \mathrm{H}(X)$.
\end{enumerate}

For a subset system $\mathrm{H}:\mathbf{Top}_{0}\longrightarrow \mathbf{Set}$ and a $T_{0}$ space $X$, let $\mathrm{H}_{c}(X)=\{\overline{A}\mid A\in \mathrm{H}(X)\}$. We call $A\subseteq X$ an \emph{$\mathrm{H}$-set} if $A\in \mathrm{H}(X)$. The sets in $\mathrm{H}_{c}(X)$ are called \emph{closed $\mathrm{H}$-sets}.
\end{definition}

\begin{definition}\cite{Xiaoquan21}
A subset system $\mathrm{H}:\mathbf{Top}_{0}\longrightarrow \mathbf{Set}$ is called an \emph{irreducible subset system}, or an \emph{R-subset
system} for short, if $\mathrm{H}(X)\subseteq \mathrm{Irr}(X)$ for all $X\in ob(\mathbf{Top}_{0})$. The family of all $R$-subset systems is denoted by $\mathcal{H}$. Define a partial order $\leq$ on $\mathcal{H}$ by $\mathrm{H}_{1}\leq \mathrm{H}_{2}$ iff $\mathrm{H}_{1}(X)\subseteq \mathrm{H}_{2}(X)$ for all $X\in ob(\mathbf{Top}_{0})$.
\end{definition}

\begin{definition}\cite{Xiaoquan21}
Let $\mathrm{H}:\mathbf{Top}_{0}\longrightarrow \mathbf{Set}$ be an R-subset
system. A $T_{0}$ space $X$ is called \emph{$\mathrm{H}$-sober} if for any $A\in \mathrm{H}(X)$, there is a (unique) point $x\in X$ such that $\overline{A} = \overline{\{x\}}$ or, equivalently, if $\mathrm{H}_{c}(X)=S_{c}(X)$. The category of all $\mathrm{H}$-sober spaces with continuous mappings is denoted by $\mathbf{H}$-$\mathbf{Sob}$.
\end{definition}

It is not difficult to verify that $\mathbf{Sob}\subseteq \mathbf{H}$-$\mathbf{Sob}$. By Definition \ref{k-reflection}, for $\mathbf{K}=\mathbf{H}$-$\mathbf{Sob}$, the $\mathbf{K}$-reflection of $X$ is called the \emph{$\mathrm{H}$-sober reflection} of $X$, or the \emph{$\mathrm{H}$-sobrification} of $X$ and $X^{h}$ to denote the space of $\mathrm{H}$-sobrification of $X$ if it exists. Moreover, a $\mathbf{K}$-determined set of $X$ in Definition \ref{k-determined} is called a \emph{$\mathrm{H}$-sober determined set} of $X$, Denote by $\mathrm{H}^{d}(X)$ the set of all $\mathrm{H}$-sober determined subsets of $X$. The set of all closed $\mathrm{H}$-sober determined subsets of $X$ is denoted by $\mathrm{H}^{d}_{c}(X)$.

\begin{theorem}\cite{Xiaoquan21}\label{$H$-sobrification}
Let $\mathrm{H}:\mathbf{Top}_{0}\longrightarrow \mathbf{Set}$ be an R-subset
system. Then $\mathbf{H}$-$\mathbf{Sob}$ is adequate. Therefore, for any $T_{0}$ space $X$, $X^{h}=P_{H}(\mathrm{H}^{d}_{c}(X))$ with the canonical topological embedding $\eta^{h}_{X}$: $X\rightarrow X^{h}$ is the $\mathrm{H}$-sobrification of $X$, where $\eta^{h}_{X}(x)=\overline{\{x\}}$ for all $x\in X$.
\end{theorem}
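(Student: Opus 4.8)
The plan is to reduce the whole statement to the single assertion that $\mathbf{H}$-$\mathbf{Sob}$ is adequate. Granting that, the ``therefore'' part is immediate from the general theory of $\mathbf{K}$-reflections in \cite{Xiaoquan20}: since $\mathbf{Sob}\subseteq\mathbf{H}$-$\mathbf{Sob}$ and $\mathbf{H}$-$\mathbf{Sob}$ is plainly a full subcategory of $\mathbf{Top}_{0}$ closed with respect to homeomorphisms, the results of \cite{Xiaoquan20} say that for such an adequate $\mathbf{K}$ the space $P_{H}(\mathrm{K}^{d}_{c}(X))$ with the map $x\mapsto\overline{\{x\}}$ is the $\mathbf{K}$-reflection of $X$, and that this map is a topological embedding. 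So fix a $T_{0}$ space $X$, write $Y:=P_{H}(\mathrm{H}^{d}_{c}(X))$, and it remains to prove that $Y$ is $\mathrm{H}$-sober.

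First I would collect a few preliminaries about $Y$. Every closed $\mathrm{H}$-sober determined subset of $X$ is irreducible (this follows by applying the definition of $\mathrm{H}$-sober determined to the canonical map from $X$ into its sobrification, which lies in $\mathbf{H}$-$\mathbf{Sob}$, and comparing closures); hence $\mathrm{H}^{d}_{c}(X)\subseteq\mathrm{Irr}_{c}(X)$, the open sets of $Y$ are exactly the sets $\diamondsuit U$ for $U\in\mathcal{O}(X)$, and $Y$ is $T_{0}$ (if $A,B\in\mathrm{H}^{d}_{c}(X)$ are distinct then, say, $A\not\subseteq B$, so $A\in\diamondsuit(X\setminus B)$ while $B\notin\diamondsuit(X\setminus B)$). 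I would also note that for $C,A\in\mathrm{H}^{d}_{c}(X)$ one has $A\in\overline{\{C\}}$ as soon as $A\subseteq C$, since then $A\in\diamondsuit U$ forces $C\in\diamondsuit U$.

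Now let $\mathcal{A}\in\mathrm{H}(Y)$; the natural candidate for its generic point is the closed set $C:=\overline{\bigcup_{A\in\mathcal{A}}A}$, and the heart of the proof is to show $C\in\mathrm{H}^{d}_{c}(X)$, i.e. that $C$ is $\mathrm{H}$-sober determined. So let $f\colon X\to Z$ be continuous with $Z$ an $\mathrm{H}$-sober space. A key observation, which must be proved directly from the definition of $\mathrm{H}$-sober determined (applied to $\id_{Z}$) rather than by appealing to Corollary \ref{3}, is that $\mathrm{H}^{d}_{c}(Z)=S_{c}(Z)$ for every $\mathrm{H}$-sober $Z$. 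By Lemma \ref{1}, $f(A)\in\mathrm{H}^{d}(Z)$, hence $\overline{f(A)}\in\mathrm{H}^{d}_{c}(Z)=S_{c}(Z)$, for each $A\in\mathrm{H}^{d}_{c}(X)$; so there is a unique point $\widehat{f}(A)\in Z$ with $\overline{\{\widehat{f}(A)\}}=\overline{f(A)}$. One checks that $\widehat{f}^{-1}(V)=\diamondsuit f^{-1}(V)$ for every $V\in\mathcal{O}(Z)$, whence $\widehat{f}\colon Y\to Z$ is continuous and $\widehat{f}(\overline{\{x\}})=f(x)$. Then $\widehat{f}(\mathcal{A})\in\mathrm{H}(Z)$ by functoriality of $\mathrm{H}$, so $\mathrm{H}$-sobriety of $Z$ gives a unique $z\in Z$ with $\overline{\{z\}}=\overline{\widehat{f}(\mathcal{A})}$; and since $\overline{f(C)}=\overline{\bigcup_{A\in\mathcal{A}}f(A)}=\overline{\bigcup_{A\in\mathcal{A}}\overline{\{\widehat{f}(A)\}}}=\overline{\widehat{f}(\mathcal{A})}=\overline{\{z\}}$, with $z$ unique because $Z$ is $T_{0}$, the set $C$ is indeed $\mathrm{H}$-sober determined, so $C$ is a point of $Y$. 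Finally $\mathcal{A}\subseteq\overline{\{C\}}$ (because $A\subseteq C$ for every $A\in\mathcal{A}$) and $C\in\overline{\mathcal{A}}$ (if $C\in\diamondsuit U$ then $U$ meets $\bigcup_{A\in\mathcal{A}}A$, hence meets some $A\in\mathcal{A}$), so $\overline{\{C\}}=\overline{\mathcal{A}}$ in $Y$, and uniqueness of $C$ follows from $Y$ being $T_{0}$. This shows $Y$ is $\mathrm{H}$-sober, hence $\mathbf{H}$-$\mathbf{Sob}$ is adequate.

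I expect the main obstacle to be the construction of the induced map $\widehat{f}\colon Y\to Z$ and the verification that $C$ is $\mathrm{H}$-sober determined: this combines Lemma \ref{1} with the identity $\mathrm{H}^{d}_{c}(Z)=S_{c}(Z)$ for $\mathrm{H}$-sober $Z$, and care is needed to keep the argument free of circularity, since the analogous facts in Corollary \ref{3} presuppose exactly the adequacy we are establishing.
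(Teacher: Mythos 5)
The paper does not prove this theorem; it is imported verbatim from \cite{Xiaoquan21}, so there is no in-paper argument to compare against. Your reconstruction is correct and follows the same route as the cited source: reduce adequacy to showing $P_{H}(\mathrm{H}^{d}_{c}(X))$ is $\mathrm{H}$-sober, take $C=\overline{\bigcup\mathcal{A}}$ as the candidate generic point, build the induced continuous map $\widehat{f}$ via $\widehat{f}^{-1}(V)=\diamondsuit f^{-1}(V)$, and use functoriality of $\mathrm{H}$ plus $\mathrm{H}$-sobriety of the target to verify that $C$ is $\mathrm{H}$-sober determined; your explicit care to derive $\mathrm{H}^{d}_{c}(Z)=S_{c}(Z)$ for $\mathrm{H}$-sober $Z$ directly from the definition (rather than from Corollary~\ref{3}, which presupposes adequacy) correctly avoids the only real circularity trap.
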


Let $\mathrm{H}:\mathbf{Top}_{0}\longrightarrow \mathbf{Set}$ be an R-subset
system. In \cite{Xiaoquan21}, it has been shown that $\mathbf{H}$-$\mathbf{Sob}$ is a full subcategory  of $\mathbf{Top}_{0}$ containing $\mathbf{Sob}$ and is closed with respect to homeomorphisms. Moreover, $\mathbf{H}$-$\mathbf{Sob}$ is adequate by Theorem \ref{$H$-sobrification}. Conversely, suppose that $\mathbf{K}$ is adequate and closed with respect to homeomorphisms. Is there an R-subset
system $\mathrm{H}$ such that $\mathbf{K}=\mathbf{H}$-$\mathbf{Sob}$? The answer is positive.

\begin{lemma}\label{2}
For a $T_{0}$ space, $S(X)\subseteq \mathrm{K}^{d}(X)\subseteq \mathrm{Irr}(X)$.
\end{lemma}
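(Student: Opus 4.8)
The plan is to establish the two inclusions separately; the first is immediate, and for the second the crucial move is to test $\mathbf{K}$-determinedness against the sobrification of $X$, which lies in $\mathbf{K}$ because $\mathbf{Sob}\subseteq\mathbf{K}$.

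For $S(X)\subseteq\mathrm{K}^{d}(X)$: fix $x\in X$ and a continuous map $f\colon X\to Y$ with $Y$ a $\mathbf{K}$-space. Taking $y=f(x)$ gives $\mathrm{cl}_{Y}(f(\{x\}))=\mathrm{cl}_{Y}(\{f(x)\})$ at once, and uniqueness holds because $Y$ is $T_{0}$: if $\mathrm{cl}_{Y}(\{y\})=\mathrm{cl}_{Y}(\{y'\})$ then $y\leq_{Y}y'$ and $y'\leq_{Y}y$, so $y=y'$. Hence $\{x\}\in\mathrm{K}^{d}(X)$.

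For $\mathrm{K}^{d}(X)\subseteq\mathrm{Irr}(X)$: let $A\in\mathrm{K}^{d}(X)$; since $A$ is irreducible exactly when $\overline{A}$ is, it suffices to show $\overline{A}\in\mathrm{Irr}_{c}(X)$. Write $X^{s}$ for the sobrification of $X$, realized as the set $\mathrm{Irr}_{c}(X)$ topologized by the opens $\diamondsuit U=\{C\in\mathrm{Irr}_{c}(X)\mid C\cap U\neq\emptyset\}$ ($U\in\mathcal{O}(X)$), with the canonical embedding $\eta\colon X\to X^{s}$, $\eta(x)=\overline{\{x\}}$. As $X^{s}$ is sober it is a $\mathbf{K}$-space, so there is a unique $y_{A}\in X^{s}$ with $\mathrm{cl}_{X^{s}}(\eta(A))=\mathrm{cl}_{X^{s}}(\{y_{A}\})$. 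The computation I would carry out is: for any $S\subseteq X$ one has $C\in\mathrm{cl}_{X^{s}}(\eta(S))$ iff $C\subseteq\overline{S}$ (a basic neighbourhood $\diamondsuit U$ of $C$ meets $\eta(S)$ iff $U$ meets $S$, using $\overline{\{s\}}\cap U\neq\emptyset$ iff $s\in U$ for open $U$), while $\mathrm{cl}_{X^{s}}(\{y_{A}\})=\{C\in\mathrm{Irr}_{c}(X)\mid C\subseteq y_{A}\}$ because the specialization order of $X^{s}$ is inclusion. Now $y_{A}$ lies in its own closure, hence in $\mathrm{cl}_{X^{s}}(\eta(A))$, which gives $y_{A}\subseteq\overline{A}$; and for each $a\in A$ we have $\overline{\{a\}}\in\mathrm{Irr}_{c}(X)$ with $\overline{\{a\}}\subseteq\overline{A}$, so $\overline{\{a\}}\in\mathrm{cl}_{X^{s}}(\eta(A))=\mathrm{cl}_{X^{s}}(\{y_{A}\})$, whence $a\in\overline{\{a\}}\subseteq y_{A}$ and therefore $\overline{A}\subseteq y_{A}$. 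Thus $\overline{A}=y_{A}\in\mathrm{Irr}_{c}(X)$, so $A$ is irreducible.

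The only step requiring any care is identifying $\mathrm{cl}_{X^{s}}(\eta(S))$ and the specialization order on $X^{s}$; both are routine once one unwinds the basic opens $\diamondsuit U$. I foresee no genuine obstacle here, since the sobrification is an explicit $\mathbf{K}$-space and the two closures match up directly; the "hard part" is really just this bookkeeping with the specialization order.
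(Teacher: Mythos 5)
Your proposal is correct and follows essentially the same route as the paper: the first inclusion is immediate, and for the second you test $\mathbf{K}$-determinedness against the sobrification $X^{s}=P_{H}(\mathrm{Irr}_{c}(X))$ (a $\mathbf{K}$-space since $\mathbf{Sob}\subseteq\mathbf{K}$) and identify $\overline{A}$ with the resulting point $y_{A}\in\mathrm{Irr}_{c}(X)$. The only difference is that you spell out the verification $\overline{A}=y_{A}$, which the paper dismisses as ``easy to check''; your bookkeeping with $\diamondsuit U$ and the specialization order is accurate.
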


\begin{proof}
Clearly, $S(X)\subseteq \mathrm{K}^{d}(X)$. Suppose that $A\in \mathrm{K}^{d}(X)$. Consider the sobrification $X^{s}$ ($=P_{H}(\mathrm{Irr}_{c}(X))$) of $X$ and the canonical topological embedding $\eta_{X}:X\rightarrow X^{s}$ defined by $\eta_{X}(x)=\overline{\{x\}}$. Then there is a $B\in \mathrm{Irr}_{c}(X)$ such that $\overline{\eta_{X}(A)}=\overline{\{B\}}$. It is easy to check that $\overline{A}=B$. Hence, $A\in \mathrm{Irr}(X)$.
\end{proof}

\begin{theorem}
Suppose that $\mathbf{K}$ is adequate and closed with respect to homeomorphisms. Then for a covariant functor $\mathrm{K}:\mathbf{Top}_{0}\longrightarrow \mathbf{Set}$ defined by $\mathrm{K}^{d}(X)$, $\forall$ $X\in ob(\mathbf{Top}_{0})$, the following statements hold:
\begin{enumerate}[(1)]
\item $\mathrm{K}:\mathbf{Top}_{0}\longrightarrow \mathbf{Set}$ is an $R$-subset system.
\item $\mathbf{K}$=$\mathbf{K}$-$\mathbf{sober}$, that is for each $\mathbf{K}$-space $X$ and any $A\in \mathrm{K}^{d}(X)$, there exists a (unique) element $x\in X$ such that $\overline{A}=\overline{\{x\}}$.
\end{enumerate}
\end{theorem}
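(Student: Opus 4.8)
The plan is to verify the two assertions essentially by unwinding the definitions and leaning on the results already collected in the excerpt, in particular Lemma~\ref{1}, Lemma~\ref{2}, and Corollary~\ref{3}. For part (1) I must check the two axioms of a subset system together with the irreducibility requirement. The inclusions $S(X)\subseteq \mathrm{K}^{d}(X)\subseteq \mathrm{Irr}(X)\subseteq 2^{X}$ are exactly the content of Lemma~\ref{2}, so axiom~(1) of a subset system and the $R$-condition come for free. For functoriality I first note that $\mathrm{K}$ is defined on objects only; the natural assignment on a continuous map $f:X\to Y$ is $A\mapsto f(A)$, and Lemma~\ref{1} tells us precisely that $f(A)\in \mathrm{K}^{d}(Y)$ whenever $A\in \mathrm{K}^{d}(X)$, which is axiom~(2). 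Covariance (i.e. $\mathrm{K}(\id_X)=\id$ and $\mathrm{K}(g\circ f)=\mathrm{K}(g)\circ\mathrm{K}(f)$) is immediate from $\id_X(A)=A$ and $(g\circ f)(A)=g(f(A))$. Hence $\mathrm{K}$ is an $R$-subset system.

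For part (2) the key point is to compare $\mathbf{K}$ with $\mathbf{K}$-$\mathbf{Sob}$ (the category of $\mathrm{K}$-sober spaces, where $\mathrm{K}$ is the $R$-subset system just constructed). By the definition of $\mathrm{H}$-sobriety applied to $\mathrm{H}=\mathrm{K}$, a $T_0$ space $X$ is $\mathrm{K}$-sober iff $\mathrm{K}^{d}_{c}(X)=S_{c}(X)$, i.e. iff every closed $\mathbf{K}$-determined subset is the closure of a point. Now invoke Corollary~\ref{3}: since $\mathbf{K}$ is adequate and closed under homeomorphisms, for every $T_0$ space $X$ the conditions ``$X$ is a $\mathbf{K}$-space'' and ``$\mathrm{K}^{d}_{c}(X)=S_{c}(X)$'' are equivalent. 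Therefore $X\in \mathbf{K}$ iff $X\in\mathbf{K}$-$\mathbf{Sob}$, which is the asserted equality $\mathbf{K}=\mathbf{K}$-$\mathbf{Sob}$. The reformulation in terms of arbitrary (not necessarily closed) $A\in\mathrm{K}^{d}(X)$ follows from the observation recorded in Definition~\ref{k-determined} that $A$ is $\mathbf{K}$-determined iff $\mathrm{cl}(A)$ is, so $\overline{A}=\overline{\{x\}}$ for some $x$ exactly when the closed $\mathbf{K}$-determined set $\mathrm{cl}(A)$ equals $\overline{\{x\}}$; uniqueness of $x$ is forced by $T_0$.

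I do not anticipate a serious obstacle here: the statement is really a bookkeeping consequence of Lemmas~\ref{1}, \ref{2} and Corollary~\ref{3}. The only point that requires a moment's care is making sure the functor $\mathrm{K}$ is well defined on morphisms with the right target — that is, that $\mathrm{K}(f)(A)$ lands in $\mathrm{K}^{d}(Y)$ rather than merely in $\mathrm{Irr}(Y)$ — but this is exactly Lemma~\ref{1}, so nothing new is needed. A secondary subtlety worth a sentence is that ``$\mathbf{K}$-determined'' in Definition~\ref{k-determined} refers to the fixed ambient category $\mathbf{K}$, whereas ``$\mathrm{K}$-sober'' refers to the newly built system $\mathrm{K}=\mathrm{K}^{d}(-)$; part (2) is precisely the assertion that these two a priori different notions delineate the same class of spaces, and Corollary~\ref{3} is what closes that gap.
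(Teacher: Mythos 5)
Your proposal is correct and follows exactly the paper's route: the paper's own proof simply states that both claims ``directly follow from Lemma~\ref{1}, Lemma~\ref{2} and Corollary~\ref{3},'' and your write-up is precisely the unwinding of that citation (Lemma~\ref{2} for the containments, Lemma~\ref{1} for functoriality on morphisms, Corollary~\ref{3} for the identification $\mathbf{K}=\mathbf{K}$-$\mathbf{Sob}$). No gaps; you have merely supplied the bookkeeping the paper leaves implicit.
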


\begin{proof}
These  statements directly follow from Lemma \ref{1}, Lemma \ref{2} and Corollary \ref{3}.
\end{proof}

For a $T_{0}$ space $X$ and $\mathcal{K}\subseteq K(X)$, let $M(\mathcal{K})=\{A\in \Gamma(X)\mid A\cap K\neq \emptyset \mbox{ for all } K\in \mathcal{K}\}$ (that is, $\mathcal{K}\subseteq \diamondsuit A$) and $m(\mathcal{K})=\{A\in \Gamma(X)\mid A\mbox{ is a minimal member of } M(\mathcal{K})\}$ (\cite{Xiaoquan20}).

In \cite{Xiaoquan21}, Xu proposed that an $R$-subset system $\mathrm{H}:\mathbf{Top}_{0}\longrightarrow \mathbf{Set}$ is said to satisfy property $M$ if for any $T_{0}$ space $X$, $\mathcal{K}\in \mathrm{H}(P_{S}(X))$ and $A\in M(\mathcal{K})$, then $\{\ua (K\cap A)\mid K\in \mathcal{K}\}\in \mathrm{H}(P_{S}(X))$. Furthermore, he proved some conclusions under the assumption that $\mathrm{H}$ has property $M$. In the following, we find that property $M$ naturally holds for each $R$-subset system $\mathrm{H}$.

\begin{proposition}\label{property $M$}
Let $\mathrm{H}:\mathbf{Top}_{0}\longrightarrow \mathbf{Set}$ be an $R$-subset system. Then property $M$ holds.
\end{proposition}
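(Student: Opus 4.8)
The plan is to exhibit $\{\ua(K\cap A)\mid K\in\mathcal{K}\}$ as a continuous image of $\mathcal{K}$ and then invoke the functoriality of $\mathrm{H}$. For $K\in K(X)$ with $K\cap A\neq\emptyset$, put $\xi_A(K)=\ua(K\cap A)$; since $A$ is closed and $K$ is compact, $K\cap A$ is a nonempty compact subset of $X$, so $\ua(K\cap A)\in K(X)$, and $\xi_A$ is well defined on $\diamondsuit A=\{K\in K(X)\mid K\cap A\neq\emptyset\}$. The first step is to check that $\xi_A\colon\diamondsuit A\to P_{S}(X)$ is continuous, where $\diamondsuit A$ carries the subspace topology of $P_{S}(X)$: for $U\in\mathcal{O}(X)$ one has $\ua(K\cap A)\subseteq U$ iff $K\cap A\subseteq U$ iff $K\subseteq U\cup(X\setminus A)$, so $\xi_A^{-1}(\Box U)=\diamondsuit A\cap\Box(U\cup(X\setminus A))$ is open in $\diamondsuit A$; since $\{\Box U\mid U\in\mathcal{O}(X)\}$ is a base of $P_{S}(X)$, $\xi_A$ is continuous.

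Two observations then organise the argument. First, $\diamondsuit A$ is closed in $P_{S}(X)$, because $P_{S}(X)\setminus\diamondsuit A=\Box(X\setminus A)$. Second, $A\in M(\mathcal{K})$ says precisely that $K\cap A\neq\emptyset$ for every $K\in\mathcal{K}$, so $\mathcal{K}\subseteq\diamondsuit A$ and hence $\overline{\mathcal{K}}\subseteq\diamondsuit A$; in particular $\xi_A$ restricts to a continuous map on the closed subspace $\overline{\mathcal{K}}$. Consequently, once we know that $\mathcal{K}$ is still an $\mathrm{H}$-set of $\overline{\mathcal{K}}$ (with its subspace topology), the defining property of a subset system gives $\xi_A(\mathcal{K})=\{\ua(K\cap A)\mid K\in\mathcal{K}\}\in\mathrm{H}(P_{S}(X))$, which is the assertion. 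So everything is reduced to showing $\mathcal{K}\in\mathrm{H}(\overline{\mathcal{K}})$.

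This transfer of the $\mathrm{H}$-set property along the inclusion of a closed subspace is the heart of the matter, and it is where I expect the real work to lie, since a subset system need not restrict to an arbitrary closed subspace. The route I would take is to produce a continuous retraction of $P_{S}(X)$ onto $\overline{\mathcal{K}}$, from which $\mathcal{K}=r(\mathcal{K})\in\mathrm{H}(\overline{\mathcal{K}})$ by functoriality. Here one can exploit that $P_{S}(X)$ is a meet-semilattice under its specialization order (reverse inclusion), with $\mathfrak{A}\wedge\mathfrak{B}=\mathfrak{A}\cup\mathfrak{B}$: for any fixed $\mathfrak{K}_{0}\in K(X)$ the map $\mathfrak{A}\mapsto\mathfrak{A}\cup\mathfrak{K}_{0}$ is a continuous retraction of $P_{S}(X)$ onto the principal ideal $\{\mathfrak{A}\in K(X)\mid\mathfrak{K}_{0}\subseteq\mathfrak{A}\}$, because $\mathfrak{A}\cup\mathfrak{K}_{0}\subseteq U$ iff $\mathfrak{A}\subseteq U$ and $\mathfrak{K}_{0}\subseteq U$. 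Thus it suffices that $\overline{\mathcal{K}}$ be such a principal ideal, i.e. that the irreducible closed set $\overline{\mathcal{K}}$ (irreducible since $\mathrm{H}(P_{S}(X))\subseteq\mathrm{Irr}(P_{S}(X))$) possess a largest point. This holds automatically when $P_{S}(X)$ is sober; in general I would reduce to this case by passing to the sobrification of $P_{S}(X)$, where $\overline{\mathcal{K}}$ acquires a generic point and the retraction becomes available. Carrying the resulting conclusion back across the sobrification embedding is the step I anticipate as the main obstacle.
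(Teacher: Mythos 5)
Your first two steps (well-definedness of $K\mapsto\ua(K\cap A)$ on $\diamondsuit A$ and the computation $\xi_A^{-1}(\Box U)=\diamondsuit A\cap\Box(U\cup(X\setminus A))$) agree with the paper's proof, but by restricting the domain to $\diamondsuit A$ you create an obstacle that you then do not overcome, and the argument as proposed has a genuine gap exactly where you say you expect one. The missing step is $\mathcal{K}\in\mathrm{H}(\overline{\mathcal{K}})$ (or $\mathcal{K}\in\mathrm{H}(\diamondsuit A)$): a subset system only pushes $\mathrm{H}$-sets \emph{forward} along continuous maps, so membership in $\mathrm{H}$ of a subspace cannot be inferred from membership in $\mathrm{H}$ of the ambient space without an actual continuous map landing in that subspace and hitting $\mathcal{K}$ exactly. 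Your retraction $\mathfrak{A}\mapsto\mathfrak{A}\cup\mathfrak{K}_0$ does this only when $\overline{\mathcal{K}}$ is a principal ideal $\{\mathfrak{A}\in K(X)\mid \mathfrak{K}_0\subseteq\mathfrak{A}\}$, i.e.\ when $\overline{\mathcal{K}}$ already has a generic point in $P_S(X)$. The proposed rescue via the sobrification $\eta:P_S(X)\to (P_S(X))^s$ does not close the gap: functoriality gives $\eta(\mathcal{K})\in\mathrm{H}(\da p)$ for the generic point $p$ of $\overline{\eta(\mathcal{K})}$, but to return you would need a continuous map from $\da p$ into $P_S(X)$ (or into $\diamondsuit A$) agreeing with $\eta^{-1}$ on $\eta(\mathcal{K})$, and $\da p$ generally contains points outside $\eta(P_S(X))$; constructing such a map is essentially the original problem in disguise. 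So the proof is complete only under extra hypotheses (e.g.\ $P_S(X)$ sober), not in general.

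The paper avoids the subspace issue entirely by defining $f(K)=\ua(K\cap A)$ on \emph{all} of $P_S(X)$, computing $f^{-1}(\Box U)=\Box\bigl(U\cup(X\setminus A)\bigr)$, and applying functoriality of $\mathrm{H}$ to the single continuous self-map $f$ of $P_S(X)$; no transfer of $\mathcal{K}$ to a subspace is needed. Note, however, that the one point the paper dismisses with ``it is straightforward to check that $f$ is well-defined'' is precisely the point that drove you to shrink the domain: for $K\in\Box(X\setminus A)$ one has $\ua(K\cap A)=\emptyset\notin K(X)$, since the paper's $K(X)$ consists of nonempty sets. So your instinct located the genuinely delicate spot in this argument; what is missing is a way either to extend $\xi_A$ continuously over $\Box(X\setminus A)$ with values in $K(X)$, or to justify the subspace transfer --- neither of which your proposal supplies.
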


\begin{proof}
Suppose that $X$ is a $T_{0}$ space, $\mathcal{K}\in \mathrm{H}(P_{S}(X))$ and $A\in M(\mathcal{K})$. Take a map $f:P_{S}(X)\rightarrow P_{S}(X)$ defined by
$$f(K)=\ua(K\cap A)$$
for any $K\in K(X)$. It is straightforward to check that $f$ is well-defined.

{\bf Claim}: $f$ is continuous.

For any $U\in \mathcal{O}(X)$, $f^{-1}(\Box U)=\{K\in K(X)\mid \ua(K\cap A)\in \Box U\}=\{K\in K(X)\mid \ua(K\cap A)\subseteq U\}=\Box((X\backslash A)\cup U)$. Hence, $f^{-1}(\Box U)$ is open in $K(X)$. So $f$ is continuous.

Since $\mathrm{H}$ is an $R$-subset system and $\mathcal{K}\in \mathrm{H}(P_{S}(X))$, we have that $f(\mathcal{K})=\{\ua (K\cap A)\mid K\in \mathcal{K}\}\in \mathrm{H}(P_{S}(X))$, that is, property $M$ holds.
\end{proof}

By Theorem 6.19, Theorem 6.20, Proposition 6.21, Corollary 6.22, Theorem 7.16 in \cite{Xiaoquan21} and Proposition \ref{property $M$}, we get that Question 1.1$\sim$Question 1.6 hold.

\begin{definition}\cite{Xiaoquan21}\label{super H-sober}
Let $\mathrm{H}:\mathbf{Top}_{0}\longrightarrow \mathbf{Set}$ be an $R$-subset system. A $T_{0}$ space $X$ is called \emph{super $\mathrm{H}$-sober} provided
its Smyth power space $P_{S}(X)$ is $\mathrm{H}$-sober. The category of all super $\mathrm{H}$-sober spaces with continuous mappings is denoted by $\mathbf{SH}$-$\mathbf{Sob}$.
\end{definition}

\begin{definition}\cite{Xiaoquan21}\label{5}
Let $\mathrm{H}:\mathbf{Top}_{0}\longrightarrow \mathbf{Set}$ be an $R$-subset system and $X$ a $T_{0}$ space. A nonempty subset $A$ of $X$ is said to have \emph{$\mathrm{H}$-Rudin property}, if there exists $\mathcal{K}\in \mathrm{H}(P_{S}(X))$ such that $\overline{A}\in m(\mathcal{K})$, that is, $\overline{A}$ is a minimal closed set that intersects all members of $\mathcal{K}$. Let $\mathrm{H}^{R}(X)=\{A\subseteq X \mid A \mbox{ has $\mathrm{H}$-Rudin property}\}$. The sets in $\mathrm{H}^{R}(X)$ will also be called $\mathrm{H}$-Rudin sets.
\end{definition}

\begin{lemma}\cite{Xiaoquan21}\label{7}
Let $\mathrm{H}:\mathbf{Top}_{0}\longrightarrow \mathbf{Set}$ be an $R$-subset system and $X$ a $T_{0}$ space. Then $\mathrm{H}(X)\subseteq \mathrm{H}^{R}(X)\subseteq \mathrm{Irr}(X)$.
\end{lemma}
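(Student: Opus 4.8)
The plan is to establish the two inclusions $\mathrm{H}(X)\subseteq\mathrm{H}^{R}(X)$ and $\mathrm{H}^{R}(X)\subseteq\mathrm{Irr}(X)$ independently. For the first, I would realize an arbitrary $\mathrm{H}$-set $A$ as a minimal closed set meeting a family of principal filters built from $A$; the $R$-subset system axioms serve only to place that family inside $\mathrm{H}(P_{S}(X))$. For the second, the argument is of Rudin-lemma type: minimality of $\overline{A}$ in $M(\mathcal{K})$ forces $\overline{A}$ to be irreducible, and the decisive ingredient is that $\mathcal{K}$, being an element of $\mathrm{H}(P_{S}(X))$, is itself an irreducible subset of $P_{S}(X)$.

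For $\mathrm{H}(X)\subseteq\mathrm{H}^{R}(X)$, consider the map $\xi\colon X\to P_{S}(X)$ given by $\xi(x)=\ua x$. Since $\ua x$ is the saturation of the compact set $\{x\}$, it is a nonempty compact saturated set, so $\xi$ is well-defined, and $\xi^{-1}(\Box U)=U$ for every $U\in\mathcal{O}(X)$, so $\xi$ is continuous. Given $A\in\mathrm{H}(X)$, condition (2) in the definition of a subset system yields $\mathcal{K}:=\xi(A)=\{\ua a\mid a\in A\}\in\mathrm{H}(P_{S}(X))$. Then $\overline{A}\in M(\mathcal{K})$ since $a\in\overline{A}\cap\ua a$ for each $a\in A$; and if $B\in\Gamma(X)$ satisfies $B\subseteq\overline{A}$ and $B\in M(\mathcal{K})$, then each $a\in A$ lies below some element of $B$, hence $a\in B$ because $B$ is a lower set, so $A\subseteq B$ and thus $B=\overline{A}$. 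Therefore $\overline{A}\in m(\mathcal{K})$, which means $A\in\mathrm{H}^{R}(X)$.

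For $\mathrm{H}^{R}(X)\subseteq\mathrm{Irr}(X)$, let $A\in\mathrm{H}^{R}(X)$ with witness $\mathcal{K}\in\mathrm{H}(P_{S}(X))$ and $\overline{A}\in m(\mathcal{K})$; as $\mathrm{H}$ is an $R$-subset system, $\mathcal{K}\in\mathrm{Irr}(P_{S}(X))$, and in particular $\overline{A}\neq\emptyset$. It suffices to show $\overline{A}$ is irreducible, since $A$ is irreducible iff $\overline{A}$ is. Suppose $\overline{A}\subseteq B\cup C$ with $B,C\in\Gamma(X)$ but $\overline{A}\not\subseteq B$ and $\overline{A}\not\subseteq C$; then $\overline{A}\cap B$ and $\overline{A}\cap C$ are proper closed subsets of $\overline{A}$, so by minimality neither belongs to $M(\mathcal{K})$, which is to say $\mathcal{K}$ meets each of the basic open sets $\Box(X\setminus(\overline{A}\cap B))$ and $\Box(X\setminus(\overline{A}\cap C))$ of $P_{S}(X)$. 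Irreducibility of $\mathcal{K}$ then forces $\mathcal{K}$ to meet their intersection $\Box\big(X\setminus(\overline{A}\cap(B\cup C))\big)=\Box(X\setminus\overline{A})$, producing some $K\in\mathcal{K}$ with $K\cap\overline{A}=\emptyset$ and contradicting $\overline{A}\in M(\mathcal{K})$. Hence $\overline{A}$, and therefore $A$, is irreducible.

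The only step I expect to need genuine care is this second inclusion: one must translate ``$\overline{A}$ is minimal in $M(\mathcal{K})$'' into the statement that $\mathcal{K}$ meets certain upper Vietoris basic opens, and recognize that it is irreducibility of $\mathcal{K}$ (not mere nonemptiness) that makes the two witnesses collide. The remaining verifications --- that $\ua x$ is compact saturated, that $\xi$ is continuous, and that closed sets are lower sets --- are routine.
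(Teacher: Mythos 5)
Your proof is correct. Note that the paper itself gives no proof of this lemma --- it is imported from \cite{Xiaoquan21} as a quoted result --- but your argument is sound and is essentially the standard one from that source: the embedding $x\mapsto\ua x$ of $X$ into $P_{S}(X)$ realizes any $\mathrm{H}$-set as a Rudin set (with minimality following from closed sets being lower sets), and the second inclusion is the topological Rudin-lemma argument, correctly using $\Box U\cap\Box V=\Box(U\cap V)$ together with the open-set characterization of irreducibility of $\mathcal{K}$ to produce a member of $\mathcal{K}$ missing $\overline{A}$.
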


\begin{lemma}\cite{Xiaoquan21}\label{4}
Let $\mathrm{H}:\mathbf{Top}_{0}\longrightarrow \mathbf{Set}$ be an $R$-subset system. Then $\mathrm{H^{R}}:\mathbf{Top}_{0}\longrightarrow \mathbf{Set}$ is an $R$-subset system, where for any continuous mapping $f:X\rightarrow Y$ in $\mathbf{Top}_{0}$, $\mathrm{H}^{R}(f):\mathrm{H}^{R}(X)\rightarrow \mathrm{H}^{R}(Y)$ is defined by $\mathrm{H}^{R}(f)(A)=f(A)$ for each $A\in \mathrm{H}^{R}(X)$.
\end{lemma}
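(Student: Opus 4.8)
The plan is to check the three defining properties of an $R$-subset system for $\mathrm{H}^{R}$. Two of them are routine: the chain $S(X)\subseteq\mathrm{H}(X)\subseteq\mathrm{H}^{R}(X)\subseteq\mathrm{Irr}(X)$ is exactly the subset-system property of $\mathrm{H}$ together with Lemma \ref{7}, and functoriality (preservation of identities and of composition) is immediate since $\mathrm{H}^{R}(f)$ is merely the direct-image assignment $A\mapsto f(A)$. So the only real content is that this assignment is well defined, i.e. that for every continuous $f\colon X\to Y$ and every $A\in\mathrm{H}^{R}(X)$ we have $f(A)\in\mathrm{H}^{R}(Y)$.

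Fix a witness $\mathcal{K}\in\mathrm{H}(P_{S}(X))$ with $\overline{A}\in m(\mathcal{K})$. Since $\overline{A}\in M(\mathcal{K})$, property $M$ (Proposition \ref{property $M$}) allows us to replace $\mathcal{K}$ by $\mathcal{K}':=\{\ua(K\cap\overline{A})\mid K\in\mathcal{K}\}$, which again lies in $\mathrm{H}(P_{S}(X))$. A short computation (using that closed sets are lower sets and $\ua(-)$-sets are upper sets) shows that $\overline{A}$ is still a minimal member of $M(\mathcal{K}')$ and, crucially, that each $K'\in\mathcal{K}'$ satisfies $\ua(K'\cap\overline{A})=K'$. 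So we may assume from now on that every $K\in\mathcal{K}$ meets $\overline{A}$ and is generated by $K\cap\overline{A}$, that is, $K=\ua(K\cap\overline{A})$.

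Now introduce $\widehat{f}\colon P_{S}(X)\to P_{S}(Y)$, $K\mapsto\ua f(K)$. It is well defined because the saturation of a continuous image of a compact set is compact saturated, and it is continuous since the preimage of a basic open set $\Box V$ ($V\in\mathcal{O}(Y)$) is $\Box f^{-1}(V)$, by the same computation as in the Claim in the proof of Proposition \ref{property $M$}. As $\mathrm{H}$ is a subset system, $\mathcal{L}:=\widehat{f}(\mathcal{K})=\{\ua f(K)\mid K\in\mathcal{K}\}\in\mathrm{H}(P_{S}(Y))$, and I will show $\overline{f(A)}\in m(\mathcal{L})$, which is exactly $f(A)\in\mathrm{H}^{R}(Y)$. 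That $\overline{f(A)}\in M(\mathcal{L})$ is easy: for $K\in\mathcal{K}$ choose $x\in\overline{A}\cap K$; then $f(x)\in f(\overline{A})\cap f(K)\subseteq\overline{f(A)}\cap\ua f(K)$.

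The delicate step, and the place where the reduction to $K=\ua(K\cap\overline{A})$ is needed, is minimality of $\overline{f(A)}$ in $M(\mathcal{L})$. Let $C\in M(\mathcal{L})$ be closed with $C\subseteq\overline{f(A)}$. Fix $K\in\mathcal{K}$. Since $C\cap\ua f(K)\neq\emptyset$ and $C$ is a lower set, $f(k)\in C$ for some $k\in K$; write $k\geq x$ with $x\in K\cap\overline{A}$, and by monotonicity of $f$ we get $f(x)\leq f(k)\in C$, hence $f(x)\in C$, so $x\in\overline{A}\cap f^{-1}(C)\cap K$. Therefore $\overline{A}\cap f^{-1}(C)$ is a closed subset of $\overline{A}$ belonging to $M(\mathcal{K})$, so minimality of $\overline{A}$ forces $\overline{A}\cap f^{-1}(C)=\overline{A}$, i.e. $f(\overline{A})\subseteq C$, whence $\overline{f(A)}\subseteq\overline{C}=C$ and finally $C=\overline{f(A)}$. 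I expect this transport of a point of $C$ back into $\overline{A}\cap K$ to be the only genuine obstacle; everything else is bookkeeping with closures, saturations and the specialization order.
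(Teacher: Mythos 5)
Your proposal is correct; every step checks out. Note, however, that the paper does not prove this lemma at all --- it is imported verbatim from \cite{Xiaoquan21} --- so there is no in-paper argument to compare against; what you have written is a self-contained reconstruction. The only non-routine content is indeed the well-definedness of $\mathrm{H}^{R}(f)$, and your two-stage factorization handles it cleanly: first you use Proposition \ref{property $M$} (proved earlier in the paper, so no circularity) to normalize the witness $\mathcal{K}$ so that every member satisfies $K=\ua(K\cap\overline{A})$ while $\overline{A}$ remains a minimal member of $M(\mathcal{K})$ (your verification that minimality survives --- pull a point of $B\cap\ua(K\cap\overline{A})$ down into $K\cap\overline{A}$ using that closed sets are lower sets --- is exactly right), and then you push forward along the continuous Smyth-power map $K\mapsto\ua f(K)$. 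The normalization is precisely what lets you transport a point of $C\cap\ua f(K)$ back to a point of $K\cap\overline{A}\cap f^{-1}(C)$ in the minimality argument, and the remaining bookkeeping ($f(\overline{A})\subseteq\overline{f(A)}$, $f^{-1}(C)$ closed, hence $\overline{A}\cap f^{-1}(C)\in M(\mathcal{K})$) is all sound. The inclusions $S(X)\subseteq\mathrm{H}^{R}(X)\subseteq\mathrm{Irr}(X)$ via Lemma \ref{7} and the functoriality of direct images are, as you say, immediate. The net effect of your composite is the single assignment $K\mapsto\ua f(K\cap\overline{A})$, which is essentially the construction used in the source \cite{Xiaoquan21}; splitting it into two continuous maps as you do makes the continuity and the membership in $\mathrm{H}(P_{S}(Y))$ entirely transparent.
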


\begin{theorem}\cite{Xiaoquan21}\label{11}
Let $\mathrm{H}:\mathbf{Top}_{0}\longrightarrow \mathbf{Set}$ be an $R$-subset system and $X$ a $T_{0}$ space. Then the following conditions are equivalent:
\begin{enumerate}[(1)]
\item $X$ is super $\mathrm{H}$-sober.
\item $X$ is $\mathrm{H}^{R}$-sober.
\end{enumerate}
\end{theorem}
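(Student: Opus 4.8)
The plan is to prove the two implications separately, in both cases fixing $\mathcal{K}\in\mathrm{H}(P_S(X))$ and comparing the generic point of $\overline{\mathcal{K}}$ in $P_S(X)$ with the minimal closed members of $M(\mathcal{K})$. Two standard facts will be used repeatedly. First, the specialization order of $P_S(X)$ is reverse inclusion, so $P_S(X)$ is $T_0$ and, for a closed irreducible set $\overline{\mathcal{K}}$, a point $K_0\in K(X)$ is a generic point of $\overline{\mathcal{K}}$ (i.e.\ $\overline{\{K_0\}}=\overline{\mathcal{K}}$) precisely when $K_0\in\overline{\mathcal{K}}$ and $K_0\subseteq K$ for every $K\in\overline{\mathcal{K}}$. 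Second, by the definition of the upper Vietoris topology, $K\in\overline{\mathcal{K}}$ holds iff every $\Box U$ with $U\in\mathcal{O}(X)$ and $K\subseteq U$ contains a member of $\mathcal{K}$. Also recall (Definition \ref{5}) that $\mathrm{H}^R_c(X)=\bigcup\{m(\mathcal{K})\mid \mathcal{K}\in\mathrm{H}(P_S(X))\}$, so $X$ is $\mathrm{H}^R$-sober iff for every $\mathcal{K}$ and every $B\in m(\mathcal{K})$ we have $B=\overline{\{x\}}$ for some $x\in X$.

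$(1)\Rightarrow(2)$. Let $A\in\mathrm{H}^R(X)$, so there is $\mathcal{K}\in\mathrm{H}(P_S(X))$ with $\overline{A}\in m(\mathcal{K})$. By $(1)$, $\overline{\mathcal{K}}=\overline{\{K_0\}}$ for some $K_0\in K(X)$, hence $K_0\subseteq K$ for all $K\in\mathcal{K}$. If $K_0\cap\overline{A}=\emptyset$ then $K_0\in\Box(X\setminus\overline{A})$, an open set of $P_S(X)$ meeting $\overline{\mathcal{K}}$, so some $K\in\mathcal{K}$ satisfies $K\subseteq X\setminus\overline{A}$, contradicting $\overline{A}\in M(\mathcal{K})$; so pick $x_0\in K_0\cap\overline{A}$. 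Then $x_0\in K$ for every $K\in\mathcal{K}$, whence $\overline{\{x_0\}}\in M(\mathcal{K})$, while $\overline{\{x_0\}}\subseteq\overline{A}$; minimality of $\overline{A}$ in $M(\mathcal{K})$ forces $\overline{A}=\overline{\{x_0\}}$. Since $X$ is $T_0$, $x_0$ is unique, and $X$ is $\mathrm{H}^R$-sober.

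$(2)\Rightarrow(1)$. I claim that for any $\mathcal{K}\in\mathrm{H}(P_S(X))$ the set $K_0:=\bigcap\mathcal{K}$ is a generic point of $\overline{\mathcal{K}}$. Since each member of $\mathcal{K}$ is a nonempty compact saturated set, $X\in M(\mathcal{K})$; moreover $M(\mathcal{K})$ is closed under filtered intersections (a filtered family of nonempty closed subsets of a compact set has nonempty intersection), so Zorn's lemma yields $B\in m(\mathcal{K})$, and $(2)$ gives $B=\overline{\{x_0\}}$; as every $K\in\mathcal{K}$ is saturated and meets $\overline{\{x_0\}}$ it contains $x_0$, so $K_0$ is nonempty and $K_0\subseteq K$ for all $K\in\overline{\mathcal{K}}$. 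The crucial step is the claim $(\star)$: \emph{for every $U\in\mathcal{O}(X)$ with $K_0\subseteq U$ some member of $\mathcal{K}$ is contained in $U$.} To prove $(\star)$, suppose not; then $X\setminus U$ is closed and meets every $K\in\mathcal{K}$, so $\{C\in\Gamma(X)\mid C\subseteq X\setminus U\text{ and }C\in M(\mathcal{K})\}$ is nonempty and closed under filtered intersections, and by Zorn has a minimal element $B^{\ast}$; any $C\in M(\mathcal{K})$ with $C\subsetneq B^{\ast}$ would also lie in this family, so $B^{\ast}\in m(\mathcal{K})\subseteq\mathrm{H}^R_c(X)$, whence by $(2)$ $B^{\ast}=\overline{\{z\}}$. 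Arguing as before, $z\in K$ for all $K\in\mathcal{K}$, i.e.\ $z\in K_0\subseteq U$, contradicting $z\in B^{\ast}\subseteq X\setminus U$. Given $(\star)$, $K_0$ is compact: any open cover of $K_0$ has union $\supseteq K_0$, so contains some $K\in\mathcal{K}$ by $(\star)$, and the compactness of $K$ together with $K_0\subseteq K$ produces a finite subcover of $K_0$; thus $K_0\in K(X)$, and $(\star)$ says precisely that $K_0\in\overline{\mathcal{K}}$. Combined with $K_0\subseteq K$ for all $K\in\overline{\mathcal{K}}$, this gives $\overline{\mathcal{K}}=\overline{\{K_0\}}$, so $P_S(X)$ is $\mathrm{H}$-sober and $X$ is super $\mathrm{H}$-sober.

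The main obstacle is direction $(2)\Rightarrow(1)$ and, within it, the claim $(\star)$. One has to recognise that the generic point of $\overline{\mathcal{K}}$ is $\bigcap\mathcal{K}$ rather than $\ua x_0$ for a single minimal closed $\mathrm{H}$-Rudin set $\overline{\{x_0\}}$ (these already differ for finite discrete $X$). The subtle point in proving $(\star)$ is that hypothesis $(2)$ only controls \emph{minimal} closed $\mathrm{H}$-Rudin sets, so one must manufacture a minimal member of $M(\mathcal{K})$ that is forced to sit inside the prescribed closed set $X\setminus U$; the observation that a minimal element of $\{C\in M(\mathcal{K})\mid C\subseteq X\setminus U\}$ is automatically minimal in all of $M(\mathcal{K})$ is exactly what makes this work. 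Everything else (the neighbourhood argument in $(1)\Rightarrow(2)$, the existence of minimal elements by Zorn, and the compactness deduction from $(\star)$) is routine.
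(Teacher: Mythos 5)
Your proof is correct. Note that the paper itself offers no proof of this statement --- it is quoted verbatim from \cite{Xiaoquan21} --- so there is nothing internal to compare against; what you have written is a complete, self-contained reconstruction along the standard lines: the topological Rudin argument (Zorn plus compactness to produce minimal members of $M(\mathcal{K})$) together with the identification of the generic point of $\overline{\mathcal{K}}$ with $\bigcap\mathcal{K}$. The two points that actually require care are both handled properly: in $(1)\Rightarrow(2)$ the observation that the generic point $K_0$ must meet $\overline{A}$, and in $(2)\Rightarrow(1)$ the claim $(\star)$, where your trick of taking a minimal element of $\{C\in M(\mathcal{K})\mid C\subseteq X\setminus U\}$ and noting it is automatically minimal in all of $M(\mathcal{K})$ is exactly what lets the $\mathrm{H}^R$-sobriety hypothesis (which only sees \emph{minimal} closed Rudin sets) be brought to bear. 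The only step left implicit is that $K_0\subseteq K$ for every $K\in\overline{\mathcal{K}}$ (not merely $K\in\mathcal{K}$), which follows in one line from saturation of $K$ and the fact that every $\Box U\ni K$ meets $\mathcal{K}$; alternatively, for the final conclusion it suffices to know $\mathcal{K}\subseteq\overline{\{K_0\}}$, which you do have.
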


\begin{definition}\cite{Xiaoquan21}\label{property Q}
An $R$-subset system $\mathrm{H}:\mathbf{Top}_{0}\longrightarrow \mathbf{Set}$ is said to satisfy \emph{property $Q$} if for any $\mathcal{K}\in \mathrm{H}(P_{S}(X))$ and any $A\in M(\mathcal{K})$, $A$ contains a closed $\mathrm{H}$-set $C$ such that $C\in M(\mathcal{K})$.
\end{definition}

In \cite{Xiaoquan21}, Theorem 5.12 pointed out that if $X$ is $\mathrm{H}$-sober and $\mathrm{H}$ has property $Q$, then $X$ is super $\mathrm{H}$-sober. The following Theorem will show that the converse also holds.

\begin{theorem}
Let $\mathrm{H}:\mathbf{Top}_{0}\longrightarrow \mathbf{Set}$ be an $R$-subset system. For a $T_{0}$ space $X$, the following two conditions are equivalent:
\begin{enumerate}[(1)]
\item $X$ is $\mathrm{H}$-sober and $\mathrm{H}$ has property $Q$.
\item $X$ is super $\mathrm{H}$-sober.
\end{enumerate}
\end{theorem}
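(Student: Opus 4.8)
The plan is to prove the nontrivial implication $(2)\Rightarrow(1)$, since $(1)\Rightarrow(2)$ is precisely Theorem~5.12 of \cite{Xiaoquan21} (recalled just above). So assume $X$ is super $\mathrm{H}$-sober; by Theorem~\ref{11} this is equivalent to $X$ being $\mathrm{H}^{R}$-sober. That $X$ is already $\mathrm{H}$-sober is then immediate from Lemma~\ref{7}: every $A\in\mathrm{H}(X)$ lies in $\mathrm{H}^{R}(X)$, so there is a (unique, by the $T_{0}$ axiom) point $x\in X$ with $\overline{A}=\overline{\{x\}}$.

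It remains to verify property~$Q$ for $X$. Fix $\mathcal{K}\in\mathrm{H}(P_{S}(X))$ and $A\in M(\mathcal{K})$; the goal is to produce a closed $\mathrm{H}$-set $C\subseteq A$ with $C\in M(\mathcal{K})$. First I would extract a minimal witness via Zorn's Lemma. Let $\mathcal{P}=\{C\in\Gamma(X)\mid C\subseteq A \text{ and } \mathcal{K}\subseteq\diamondsuit C\}$, ordered by inclusion; it is nonempty because $A\in\mathcal{P}$. Given a chain $\{C_{i}\}_{i\in I}$ in $\mathcal{P}$, the set $\bigcap_{i\in I}C_{i}$ is closed and contained in $A$, and for each $K\in\mathcal{K}$ the family $\{C_{i}\cap K\}_{i\in I}$ is a chain of nonempty closed subsets of the compact space $K$, hence has nonempty intersection; therefore $\bigl(\bigcap_{i\in I}C_{i}\bigr)\cap K\neq\emptyset$, so $\bigcap_{i\in I}C_{i}\in\mathcal{P}$. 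By Zorn's Lemma $\mathcal{P}$ has a minimal element $C$, which is in fact a minimal member of $M(\mathcal{K})$, since anything in $M(\mathcal{K})$ contained in $C$ is automatically contained in $A$ and hence lies in $\mathcal{P}$. As $\overline{C}=C\in m(\mathcal{K})$ with $\mathcal{K}\in\mathrm{H}(P_{S}(X))$, Definition~\ref{5} gives $C\in\mathrm{H}^{R}(X)$; since $X$ is $\mathrm{H}^{R}$-sober, $C=\overline{C}=\overline{\{x\}}$ for some $x\in X$. Finally $\{x\}\in S(X)\subseteq\mathrm{H}(X)$, so $C=\overline{\{x\}}\in\mathrm{H}_{c}(X)$ is a closed $\mathrm{H}$-set; together with $C\subseteq A$ and $C\in m(\mathcal{K})\subseteq M(\mathcal{K})$, this is exactly the conclusion demanded by property~$Q$.

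The step that needs care is the Zorn's Lemma argument, and it works only because the members of $\mathcal{K}$ are compact (they belong to $K(X)$): that compactness is what prevents an intersection taken along a chain from falling out of $M(\mathcal{K})$. Apart from this, the argument draws solely on Theorem~\ref{11}, Lemma~\ref{7}, Definition~\ref{5}, and the inclusion $S(X)\subseteq\mathrm{H}(X)$ built into the notion of a subset system; in particular, Proposition~\ref{property $M$} is not required for this direction.
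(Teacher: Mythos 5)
Your proof is correct and follows essentially the same route as the paper: $(1)\Rightarrow(2)$ is delegated to Theorem~5.12 of \cite{Xiaoquan21}, and for $(2)\Rightarrow(1)$ both arguments extract a minimal closed $B\subseteq A$ in $M(\mathcal{K})$, observe $B\in\mathrm{H}^{R}(X)$, and invoke Theorem~\ref{11} to conclude $B=\overline{\{x\}}$ is a closed $\mathrm{H}$-set. The only difference is cosmetic: where the paper cites the topological Rudin Lemma, you prove it inline via Zorn's Lemma and compactness of the members of $\mathcal{K}$, and you also spell out why super $\mathrm{H}$-sobriety already gives $\mathrm{H}$-sobriety (via $\mathrm{H}\leq\mathrm{H}^{R}$), which the paper leaves implicit.
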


\begin{proof}
$(1)\Rightarrow(2)$: Follows directly from Theorem 5.12 in \cite{Xiaoquan21}.

$(2)\Rightarrow(1)$: We only need to show that $\mathrm{H}$ has property $Q$. For any $\mathcal{K}\in \mathrm{H}(P_{S}(X))$ and any $A\in M(\mathcal{K})$, by Rudin Lemma, we have that there exists a minimal closed subset $B\subseteq A$ such that $B\in M(\mathcal{K})$. Thus $B\in \mathrm{H}^{R}(X)$. Since $X$ is super $\mathrm{H}$-sober, by Theorem \ref{11}, we have that $X$ is $\mathrm{H}^{R}$-sober. Therefore, there exists $x\in X$ such that $B=\overline{\{x\}}$. This implies that $B$ is a closed $\mathrm{H}$-set. Thus $\mathrm{H}$ has property $Q$.
\end{proof}

In the following, we will prove that for an $R$-subset system $\mathrm{H}:\mathbf{Top}_{0}\longrightarrow \mathbf{Set}$, $\mathrm{R}:\mathcal{H}\rightarrow \mathcal{H}$, defined by $\mathrm{H}\mapsto \mathrm{H}^{R}$ is a closure operator.

\begin{theorem}
Let $\mathrm{H}:\mathbf{Top}_{0}\longrightarrow \mathbf{Set}$ be an $R$-subset system. Then $\mathrm{R}:\mathcal{H}\rightarrow \mathcal{H}$, defined by $\mathrm{H}\mapsto \mathrm{H}^{R}$ is a closure operator.
\end{theorem}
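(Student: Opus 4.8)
The plan is to verify the three clauses of Definition~\ref{6} for $\mathrm R\colon\mathcal H\to\mathcal H$, $\mathrm H\mapsto\mathrm H^R$ (a well-defined self-map of $\mathcal H$ by Lemma~\ref{4}). Expansiveness, $\mathrm H\le\mathrm H^R$, is the first inclusion of Lemma~\ref{7}, and monotonicity is routine: if $\mathrm H_1\le\mathrm H_2$ and $A\in\mathrm H_1^R(X)$, then any $\mathcal K\in\mathrm H_1(P_S(X))$ witnessing $\overline A\in m(\mathcal K)$ already lies in $\mathrm H_2(P_S(X))$, so $A\in\mathrm H_2^R(X)$. The substance of the theorem is idempotency, $(\mathrm H^R)^R=\mathrm H^R$; since expansiveness applies to $\mathrm H^R$ as well, I only need to prove $(\mathrm H^R)^R(X)\subseteq\mathrm H^R(X)$ for every $T_0$ space $X$.

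Fixing $A\in(\mathrm H^R)^R(X)$ and unwinding Definition~\ref{5} twice, I get $\mathcal K\in\mathrm H^R(P_S(X))$ with $\overline A\in m(\mathcal K)$ and, since $\mathcal K$ is an $\mathrm H$-Rudin set of $P_S(X)$, a family $\mathfrak K\in\mathrm H(P_S(P_S(X)))$ with $\overline{\mathcal K}\in m(\mathfrak K)$, where $\overline{\mathcal K}$ is the closure of $\mathcal K$ in $P_S(X)$. I would then aim to produce a single $\mathcal L\in\mathrm H(P_S(X))$ with $\overline A\in m(\mathcal L)$, which by Definition~\ref{5} gives $A\in\mathrm H^R(X)$. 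The naive candidate $\{\bigcup\mathbb K\mid\mathbb K\in\mathfrak K\}$ is too coarse for the minimality clause, so first I would refine $\mathfrak K$ by invoking Proposition~\ref{property $M$}: property $M$ holds for every $R$-subset system, so applying it to the $T_0$ space $P_S(X)$, the $\mathrm H$-set $\mathfrak K$, and the closed set $\overline{\mathcal K}\in M(\mathfrak K)$ produces
\[\mathfrak K'\ :=\ \{\ua_{P_S(X)}(\mathbb K\cap\overline{\mathcal K})\mid \mathbb K\in\mathfrak K\}\ \in\ \mathrm H(P_S(P_S(X))).\]
Next I would set $\mathcal L:=\{\bigcup\mathbb K'\mid\mathbb K'\in\mathfrak K'\}$, which coincides with $\{\bigcup(\mathbb K\cap\overline{\mathcal K})\mid\mathbb K\in\mathfrak K\}$ since $\bigcup\ua_{P_S(X)}(\mathbb K\cap\overline{\mathcal K})=\bigcup(\mathbb K\cap\overline{\mathcal K})$. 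As each $\mathbb K'\in\mathfrak K'$ is a nonempty compact saturated subset of $P_S(X)$, Lemma~\ref{9} gives $\bigcup\mathbb K'\in K(X)$, and Corollary~\ref{8} with the functoriality of $\mathrm H$ then gives $\mathcal L=\mathrm H(\bigcup)(\mathfrak K')\in\mathrm H(P_S(X))$.

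It then remains to check $\overline A\in m(\mathcal L)$. For $\overline A\in M(\mathcal L)$: given $\mathbb K\in\mathfrak K$, choose $K'\in\mathbb K\cap\overline{\mathcal K}\neq\emptyset$; if $K'\cap\overline A=\emptyset$ then $\Box(X\setminus\overline A)$ is an open neighbourhood of $K'$ in $P_S(X)$, hence meets $\mathcal K$, yielding $K''\in\mathcal K$ with $K''\cap\overline A=\emptyset$, contrary to $\overline A\in M(\mathcal K)$; so $K'\cap\overline A\neq\emptyset$ and a fortiori $\bigcup(\mathbb K\cap\overline{\mathcal K})$ meets $\overline A$. For minimality, I would take a closed $C\subseteq\overline A$ with $C\in M(\mathcal L)$ and set $\mathcal D:=\overline{\mathcal K}\cap\diamondsuit C$, a closed subset of $P_S(X)$ contained in $\overline{\mathcal K}$. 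For every $\mathbb K\in\mathfrak K$, $C$ meets $\bigcup(\mathbb K\cap\overline{\mathcal K})$, so some $K'\in\mathbb K\cap\overline{\mathcal K}$ meets $C$, i.e.\ $K'\in\mathbb K\cap\mathcal D$; hence $\mathcal D\in M(\mathfrak K)$, and minimality of $\overline{\mathcal K}$ in $m(\mathfrak K)$ forces $\mathcal D=\overline{\mathcal K}$, i.e.\ $\overline{\mathcal K}\subseteq\diamondsuit C$. In particular every member of $\mathcal K\subseteq\overline{\mathcal K}$ meets $C$, so $C\in M(\mathcal K)$; with $\overline A\in m(\mathcal K)$ and $C\subseteq\overline A$ this forces $C=\overline A$. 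Hence $\overline A\in m(\mathcal L)$, so $A\in\mathrm H^R(X)$, and idempotency, hence the theorem, follows.

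The one genuine obstacle is this minimality clause. With the naive family $\{\bigcup\mathbb K\}$, the set $C$ might meet $\bigcup\mathbb K$ only through a compact set $K'$ lying outside $\overline{\mathcal K}$, and that information cannot be fed back to $m(\mathfrak K)$. Passing from $\mathfrak K$ to $\mathfrak K'$ via property $M$ — now unconditional by Proposition~\ref{property $M$} — is precisely what forces every relevant $K'$ into $\overline{\mathcal K}$; everything else is bookkeeping with the base $\{\Box U\}$ of the upper Vietoris topology and the reverse-inclusion order on $K(X)$.
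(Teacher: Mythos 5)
Your proposal is correct and follows essentially the same route as the paper's proof: monotonicity and expansiveness from Lemma~\ref{4} and Lemma~\ref{7}, and for idempotency the same construction $\mathcal L=\{\bigcup(\mathbb K\cap\overline{\mathcal K})\mid\mathbb K\in\mathfrak K\}$ via property $M$ (Proposition~\ref{property $M$}), Lemma~\ref{9} and Corollary~\ref{8}, with the same two-step verification that $\overline A\in m(\mathcal L)$ by feeding $\diamondsuit C\cap\overline{\mathcal K}$ back into the minimality of $\overline{\mathcal K}$. No substantive differences.
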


\begin{proof}
For any $R$-subset system $\mathrm{H}$, by Lemma \ref{4}, we have that $\mathrm{H}^{R}$ is also an $R$-subset system. So $\mathrm{R}$ is well-defined.

{\bf Claim 1}: $\mathrm{R}$ is monotone, that is, for $\mathrm{H_{1}},\mathrm{H_{2}}\in \mathcal{H}$ and $\mathrm{H_{1}}\leq\mathrm{H_{2}}$, $\mathrm{H}^{R}_{1}\leq \mathrm{H}^{R}_{2}$.

Assume that $X$ is a $T_{0}$ space. For any $A\in \mathrm{H}^{R}_{1}(X)$, by Definition \ref{5}, there exists $\mathcal{K}\in \mathrm{H_{1}}(P_{S}(X))$ such that $\overline{A}\in m(\mathcal{K})$. Since $\mathrm{H_{1}}\leq\mathrm{H_{2}}$, that is, $\mathrm{H_{1}}(P_{S}(X))\subseteq\mathrm{H_{2}}(P_{S}(X))$, we have that $\mathcal{K}\in \mathrm{H_{2}}(P_{S}(X))$ and hence, $A\in \mathrm{H}^{R}_{2}(X)$. So $\mathrm{H}^{R}_{1}(X)\subseteq \mathrm{H}^{R}_{2}(X)$ for any $T_{0}$ space $X$, which implies that $\mathrm{H}^{R}_{1}\leq \mathrm{H}^{R}_{2}$.

{\bf Claim 2}: $R$ is expansive, that is, for any $\mathrm{H}\in \mathcal{H}$, $\mathrm{H}\leq \mathrm{H}^{R}$.

Follows directly from Lemma \ref{7}.

{\bf Claim 3}: $R$ is idempotent, that is, for any $\mathrm{H}\in \mathcal{H}$, $(\mathrm{H}^{R})^{R}=\mathrm{H}^{R}$.

Since $R$ is expansive, we have that $\mathrm{H}^{R}\leq (\mathrm{H}^{R})^{R}$. Conversely, we only need to show that $(\mathrm{H}^{R})^{R}(X)\subseteq \mathrm{H}^{R}(X)$ for any $T_{0}$ space $X$. Suppose that $A\in (\mathrm{H}^{R})^{R}(X)$. By Definition \ref{5}, there exists $\mathcal{A}\in \mathrm{H}^{R}(P_{S}(X))$ such that $\overline{A}\in m(\mathcal{A})$. For $\mathcal{A}$, again by Definition \ref{5}, there exists $\mathcal{K}=\{\mathcal{K}_{i}\}_{i\in I}\in \mathrm{H}(P_{S}(P_{S}(X)))$ such that $\overline{\mathcal{A}}\in m(\mathcal{K})$. For any $i\in I$, let $K_{i}=\bigcup \ua_{K(X)}(\mathcal{K}_{i}\bigcap \overline{\mathcal{A}})=
\bigcup(\mathcal{K}_{i}\bigcap \overline{\mathcal{A}})$. Then by Lemma \ref{9}, Corollary \ref{8} and property $M$ of $\mathrm{H}$, we have $\{K_{i}\}_{i\in I}\in \mathrm{H}(P_{S}(X))$ and $K_{i}\in \overline{\mathcal{A}}$ for each $i\in I$ since $\overline{\mathcal{A}}$ is a lower set.

{\bf Claim 3.1}: $\overline{A}\in M(\{K_{i}\}_{i\in I})$.

Suppose not, there exists $i\in I$ such that $\overline{A}\cap K_{i}=\emptyset$. That is $K_{i}\subseteq X\backslash \overline{A}$. So $K_{i}\in \Box(X\backslash \overline{A})$. Since $K_{i}\in \overline{\mathcal{A}}$, there exists $K\in \Box(X\backslash \overline{A})\bigcap \mathcal{A}$, which contradicts the fact that $\overline{A}\in m(\mathcal{A})$.

{\bf Claim 3.2}: $\overline{A}\in m(\{K_{i}\}_{i\in I})$.

Let $B\subseteq \overline{A}$ be a closed subset and $B\in M(\{K_{i}\}_{i\in I})$. Then for any $i\in I$, there exists an element $N\in \mathcal{K}_{i}\bigcap \overline{\mathcal{A}}$ such that $B\cap N\neq \emptyset$, it is $\diamondsuit B\bigcap \mathcal{K}_{i}\bigcap \overline{\mathcal{A}}\neq \emptyset$. By the minimality of $\overline{\mathcal{A}}$, we have $\overline{\mathcal{A}}=\diamondsuit B\bigcap \overline{\mathcal{A}}$, and consequently, $\mathcal{A}\subseteq\overline{\mathcal{A}}\subseteq\diamondsuit B$. So $B\in M(\mathcal{A})$. By the minimality of $\overline{A}$, we have $\overline{A}=B$. Thus $\overline{A}\in m(\{K_{i}\}_{i\in I})$.

Therefore, for $A$, there exists $\{K_{i}\}_{i\in I}\in \mathrm{H}(P_{S}(X))$ such that $A\in m(\{K_{i}\}_{i\in I})$, which means $A\in \mathrm{H}^{R}(X)$.
\end{proof}

\section{The finite product of hyper-sober spaces}
The concept of hyper-sober spaces was introduced in \cite{Zhao15}. And in \cite{Wennana}, Wen and Xu gave a counterexample to show that the product of a countable infinite family of hyper-sober spaces is not hyper-sober in general. Meantime, they posed the following question:

Is the product space of two hyper-sober spaces again a hyper-sober space?

In this Section, we will give a positive answer to the above question.

\begin{definition}(\cite{Zhao15})
A topological space $X$ is called \emph{hyper-sober} if for any irreducible set $F$, there is a unique $x\in F$ such that $F\subseteq \mathrm{cl}(\{x\})$.
\end{definition}

\begin{lemma}(\cite{Xiaoquan21})\label{13}
Let $X$ be a space. Then the following conditions are equivalent for a subset $A\subseteq X$:
\begin{enumerate}[(1)]
\item $A$ is an irreducible subset of $X$.
\item $\mathrm{cl}_{X}(A)$ is an irreducible subset of $X$.
\end{enumerate}
\end{lemma}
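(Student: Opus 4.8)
The plan is to prove both implications directly from the definition of an irreducible subset, relying only on two elementary facts: $\mathrm{cl}_X(A)$ is the smallest closed set containing $A$, and the union of two closed sets is closed. No auxiliary machinery from the preliminaries is needed.

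For the direction $(1)\Rightarrow(2)$, I would assume $A$ is irreducible. Since $A$ is nonempty, $\mathrm{cl}_X(A)$ is nonempty. Now take $B,C\in\Gamma(X)$ with $\mathrm{cl}_X(A)\subseteq B\cup C$. Then $A\subseteq\mathrm{cl}_X(A)\subseteq B\cup C$, so by irreducibility of $A$ we get $A\subseteq B$ or $A\subseteq C$; since $B$ and $C$ are closed, this upgrades to $\mathrm{cl}_X(A)\subseteq B$ or $\mathrm{cl}_X(A)\subseteq C$. Hence $\mathrm{cl}_X(A)$ is irreducible.

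For $(2)\Rightarrow(1)$, I would assume $\mathrm{cl}_X(A)$ is irreducible; then $\mathrm{cl}_X(A)\neq\emptyset$, and since $\mathrm{cl}_X(\emptyset)=\emptyset$ this forces $A\neq\emptyset$. Given $B,C\in\Gamma(X)$ with $A\subseteq B\cup C$, the set $B\cup C$ is closed, so $\mathrm{cl}_X(A)\subseteq B\cup C$; irreducibility of $\mathrm{cl}_X(A)$ then gives $\mathrm{cl}_X(A)\subseteq B$ or $\mathrm{cl}_X(A)\subseteq C$, and a fortiori $A\subseteq B$ or $A\subseteq C$. Thus $A$ is irreducible.

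I do not expect any real obstacle here: the whole argument is a two-line symmetry between $A$ and its closure. The only point that needs slight attention is the nonemptiness clause in the definition of irreducible set, which must be tracked in both directions but follows at once from $\mathrm{cl}_X(\emptyset)=\emptyset$.
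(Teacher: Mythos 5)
Your proof is correct and is the standard argument (the paper itself only cites this lemma from \cite{Xiaoquan21} without proof): both directions reduce to the observation that for a closed set $B$, $A\subseteq B$ if and only if $\mathrm{cl}_X(A)\subseteq B$, and you correctly track the nonemptiness clause. Nothing further is needed.
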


\begin{lemma}(\cite{Xiaoquan21})\label{12}
If $f:X\rightarrow Y$ is continuous and $A\in \mathrm{Irr}(X)$, then $f(A)\in \mathrm{Irr}(Y)$.
\end{lemma}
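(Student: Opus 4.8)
The plan is to argue directly from the definition of irreducibility, using nothing beyond the fact that preimages of closed sets under a continuous map are closed. First I would observe that $f(A)$ is nonempty because $A$ is nonempty (recall that members of $\mathrm{Irr}(X)$ are nonempty by definition), so the only thing left to verify is the splitting condition against pairs of closed sets in $Y$.

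Next I would take arbitrary $B,C\in\Gamma(Y)$ with $f(A)\subseteq B\cup C$. Pulling back along $f$ gives $A\subseteq f^{-1}(B\cup C)=f^{-1}(B)\cup f^{-1}(C)$, and continuity of $f$ guarantees $f^{-1}(B),f^{-1}(C)\in\Gamma(X)$. Applying the hypothesis $A\in\mathrm{Irr}(X)$ to this pair yields $A\subseteq f^{-1}(B)$ or $A\subseteq f^{-1}(C)$; taking images through $f$ then gives $f(A)\subseteq B$ or $f(A)\subseteq C$, which is exactly what is required, so $f(A)\in\mathrm{Irr}(Y)$. Equivalently, one could run the dual argument through open sets: if $U,V\in\mathcal{O}(Y)$ both meet $f(A)$, then $f^{-1}(U),f^{-1}(V)\in\mathcal{O}(X)$ both meet $A$, hence $A\cap f^{-1}(U)\cap f^{-1}(V)\neq\emptyset$ by irreducibility of $A$, and pushing a witness point forward shows $f(A)\cap U\cap V\neq\emptyset$.

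There is no substantial obstacle here; the only points needing (minimal) care are that $f^{-1}$ commutes with unions and intersections and that the inclusions $f(f^{-1}(B))\subseteq B$ and $A\subseteq f^{-1}(f(A))$ run in the directions the argument uses. Lemma \ref{13} is not needed for this statement, but it and Lemma \ref{12} together are precisely the tools that will let irreducible sets be transported across the projection maps of a product space, which is where the finite-product theorem for hyper-sober spaces will ultimately be established.
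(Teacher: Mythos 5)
Your argument is correct and is the standard (essentially the only) proof of this fact; the paper itself states the lemma as a citation from \cite{Xiaoquan21} without reproducing a proof, and your pullback-of-closed-sets argument is exactly the canonical one. Nothing further is needed.
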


\begin{theorem}
Let $X$ and $Y$ be two hyper-sober spaces. Then the product space $X\times Y$ is also hyper-sober.
\end{theorem}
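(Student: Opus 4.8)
The plan is to take an arbitrary irreducible set $F \subseteq X \times Y$ and produce the required unique point by projecting to each factor. First I would let $p_X : X\times Y \to X$ and $p_Y : X\times Y \to Y$ be the canonical projections, which are continuous. By Lemma~\ref{12}, $p_X(F) \in \mathrm{Irr}(X)$ and $p_Y(F)\in \mathrm{Irr}(Y)$. Since $X$ and $Y$ are hyper-sober, there is a unique $x_0 \in p_X(F)$ with $p_X(F) \subseteq \mathrm{cl}_X(\{x_0\})$ and a unique $y_0 \in p_Y(F)$ with $p_Y(F)\subseteq \mathrm{cl}_Y(\{y_0\})$. The candidate point is $(x_0,y_0)$, and the first thing to check is that $(x_0,y_0) \in F$.

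For membership, I would argue that $x_0\in p_X(F)$ means there is some $y_1$ with $(x_0,y_1)\in F$; I need to upgrade $y_1$ to $y_0$. The key observation is that $F \subseteq \mathrm{cl}_X(\{x_0\}) \times \mathrm{cl}_Y(\{y_0\})$: indeed every $(a,b)\in F$ has $a \in p_X(F)\subseteq \mathrm{cl}_X(\{x_0\})$ and $b\in p_Y(F)\subseteq \mathrm{cl}_Y(\{y_0\})$. Conversely, $\mathrm{cl}_{X\times Y}(\{(x_0,y_0)\}) = \mathrm{cl}_X(\{x_0\})\times \mathrm{cl}_Y(\{y_0\})$ (a standard fact about product topologies and specialization order), so $F \subseteq \mathrm{cl}_{X\times Y}(\{(x_0,y_0)\})$ once membership is established. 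To get membership itself, I would use irreducibility of $F$ together with the pieces already in hand: from $(x_0,y_1)\in F$ and from some $(x_1,y_0)\in F$ (obtained symmetrically from $y_0\in p_Y(F)$), and the fact that $x_1 \leq_X x_0$ and $y_1 \leq_Y y_0$ in the specialization orders, one wants to ``combine'' these to land on $(x_0,y_0)$. The cleanest route is: consider the irreducible set $F$ and note $F = (F \cap (\mathrm{cl}_X(\{x_0\})\times Y))$, and then intersect with basic closed sets to force the coordinate; alternatively, show directly that every open neighbourhood of $(x_0,y_0)$ meets $F$, using that a basic open $U\times V$ containing $(x_0,y_0)$ has $U$ meeting $p_X(F)$ (since $x_0\in \mathrm{cl}_X$ of each point's... ) — more precisely $x_0 \in \mathrm{cl}_X(\{x_0\})$ and $p_X(F)$ is irreducible with $x_0$ in its closure's "generic" position, so $U \cap p_X(F)\neq\emptyset$; then pull back and use irreducibility of $F$ to intersect $p_X^{-1}(U)\cap F$ with $p_Y^{-1}(V)\cap F$.

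For uniqueness, suppose $(x',y')\in F$ also satisfies $F\subseteq \mathrm{cl}_{X\times Y}(\{(x',y')\})$. Projecting, $p_X(F)\subseteq \mathrm{cl}_X(\{x'\})$ with $x' = p_X(x',y') \in p_X(F)$, so by the uniqueness clause of hyper-sobriety for $X$ we get $x'=x_0$, and symmetrically $y'=y_0$; hence $(x',y')=(x_0,y_0)$. This part is routine once the projection machinery is set up.

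\textbf{Main obstacle.} The delicate step is establishing $(x_0,y_0)\in F$: the projections individually only guarantee that $x_0$ and $y_0$ are attained by \emph{possibly different} points of $F$, and irreducibility must be leveraged carefully to merge them. I expect the proof to handle this by verifying that every basic open neighbourhood $U\times V$ of $(x_0,y_0)$ meets $F$ — using that $U$ meets $p_X(F)$ and $V$ meets $p_Y(F)$, hence $p_X^{-1}(U)$ and $p_Y^{-1}(V)$ are open sets each meeting $F$, so by irreducibility of $F$ their intersection $p_X^{-1}(U)\cap p_Y^{-1}(V) = U\times V$ meets $F$ — which shows $(x_0,y_0)\in \mathrm{cl}_{X\times Y}(F)$. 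Since hyper-sobriety of the factors does not a priori force $F$ to be closed, I would then either invoke that it suffices to find the generic point in $\overline{F}$ (irreducible by Lemma~\ref{13}, with the same closure), or note that $\mathrm{cl}(F)=\mathrm{cl}_X(\{x_0\})\times\mathrm{cl}_Y(\{y_0\})$ which has generic point $(x_0,y_0)$, and that this generic point actually lies in $F$ by a final irreducibility argument pinning the coordinates one at a time.
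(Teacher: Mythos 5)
Your skeleton coincides with the paper's: project $F$ to each factor, obtain the candidate point $(x_0,y_0)$ from hyper-sobriety of $X$ and $Y$, note $F\subseteq\mathrm{cl}(\{x_0\})\times\mathrm{cl}(\{y_0\})=\mathrm{cl}(\{(x_0,y_0)\})$, and get uniqueness by projecting again. Your argument that $(x_0,y_0)\in\mathrm{cl}(F)$ (every basic open $U\times V$ containing it meets $F$, since $F$ meets $p_X^{-1}(U)$ and $p_Y^{-1}(V)$ and is irreducible) is correct and is also implicitly used in the paper. You have also correctly located the crux: showing $(x_0,y_0)\in F$ rather than merely in $\mathrm{cl}(F)$.

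However, neither of your two proposed ways out of that crux is an argument, so there is a genuine gap. Passing to $\overline{F}$ cannot suffice: hyper-sobriety demands the generic point lie in $F$ itself, and this is exactly what distinguishes it from sobriety, so ``find the generic point of $\overline{F}$'' only reproves sobriety of the product. Likewise, ``combining'' $(x_0,y_1)\in F$ with $(x_1,y_0)\in F$, or ``pinning the coordinates one at a time,'' names the goal without supplying a mechanism; nothing established so far rules out that $F$ realizes $x_0$ and $y_0$ only at different points. The missing idea, which is the heart of the paper's proof, is a \emph{second} application of hyper-sobriety of the factors to show that $x_0\notin\mathrm{cl}\bigl(p_X(F)\setminus\{x_0\}\bigr)$: if it were in that closure, then $p_X(F)\setminus\{x_0\}$ would be irreducible with closure $\mathrm{cl}(\{x_0\})$, so hyper-sobriety of $X$ would produce $a\in p_X(F)\setminus\{x_0\}$ with $p_X(F)\setminus\{x_0\}\subseteq\mathrm{cl}(\{a\})$; since $a\leq x_0$ and $a\neq x_0$, the open set $X\setminus\mathrm{cl}(\{a\})$ contains $x_0$ yet misses $p_X(F)\setminus\{x_0\}$, a contradiction. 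This yields an open $U\ni x_0$ with $U\cap p_X(F)=\{x_0\}$, and symmetrically a $V\ni y_0$ with $V\cap p_Y(F)=\{y_0\}$. Combining with your closure argument, $U\times V$ meets $F$ in some point $(b,c)$, and then $b\in U\cap p_X(F)$ forces $b=x_0$ and $c\in V\cap p_Y(F)$ forces $c=y_0$, so $(x_0,y_0)\in F$. Without this isolation step the membership claim remains unproved.
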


\begin{proof}
Let $A$ be an irreducible subset in $X\times Y$. Suppose $P_{X}:X\times Y\rightarrow X$ and $P_{Y}:X\times Y\rightarrow Y$ are projections, respectively. Note that $P_{X}$ and $P_{Y}$ are continuous. By Lemma \ref{12}, $P_{X}(A)\in \mathrm{Irr}(X)$ and $P_{Y}(A)\in \mathrm{Irr}(Y)$. Since $X$ and $Y$ are hyper-sober, there exist $x\in P_{X}(A)$ and $y\in P_{Y}(A)$ such that $P_{X}(A)\subseteq \mathrm{cl}(\{x\})$ and $P_{Y}(A)\subseteq \mathrm{cl}(\{y\})$. This implies that $A\subseteq P_{X}(A)\times P_{Y}(A)\subseteq \mathrm{cl}(\{x\})\times \mathrm{cl}(\{y\})=\da(x,y)$. It is sufficient to prove that  $(x,y)\in A$.

{\bf Claim}: $x\not\in \overline{P_{X}(A)\backslash\{x\}}$ and $y\not\in \overline{P_{Y}(A)\backslash\{y\}}$.

Suppose not, $x\in \overline{P_{X}(A)\backslash\{x\}}$. One can directly get $\overline{P_{X}(A)\backslash\{x\}}=\da x$. Then $P_{X}(A)\backslash\{x\}\in \mathrm{Irr}(X)$ by Lemma \ref{13}. Again since $X$ is hyper-sober, there is an element $a\in P_{X}(A)\backslash\{x\}$ such that $P_{X}(A)\backslash\{x\}\subseteq \da a$. This implies that $x\in X\setminus \da a$. Thus $(P_{X}(A)\backslash\{x\})\cap(X\setminus \da a)\neq \emptyset$, which contradicts $P_{X}(A)\backslash\{x\}\subseteq \da a$. So $x\not\in \overline{P_{X}(A)\backslash\{x\}}$. For $y\not\in \overline{P_{Y}(A)\backslash\{y\}}$, the proof is similar to that the case $x\not\in \overline{P_{X}(A)\backslash\{x\}}$.

Therefore, there exist open neighborhoods $U$ of $x$ and $V$ of $y$ such that $U\cap (P_{X}(A)\backslash\{x\})=\emptyset$ and $V\cap (P_{Y}(A)\backslash\{y\})=\emptyset$, respectively. Since $(x,y)\in U\times V$ and $(x,y)\in\mathrm{cl}(A)$, there exists $(b,c)\in (U\times V)\cap A$. This implies that $b\in U\cap P_{X}(A)$ and $c\in V\cap P_{Y}(A)$. So $b=x$ and $c=y$, and hence, $(x,y)\in A$.
\end{proof}

\section{Reference}
\bibliographystyle{plain}

\begin{thebibliography}{10}
\bibitem{Barr}
M. Barr, C. Wells,
\newblock {Category Theory}:
\newblock {Lecture Notes for ESSLLI}, 1999.

\bibitem{Davey02}
B.A. Davey,
\newblock {Introduction to Lattices and Order, 2nd edition},
\newblock {\em Cambridge University Press}, 2003.

\bibitem{Ershov 17}
Y. Ershov,
\newblock {The $d$-rank of a topological space},
\newblock {\em Algebra and Logic}, 56.2 (2017): 98-107.

\bibitem{clad3}
G.~Gierz, K.~H. Hofmann, K.~Keimel, J.~D. Lawson, M.~Mislove, and D.~S. Scott,
\newblock { Continuous Lattices and Domains}, Volume~93 of {\em Encyclopedia
  of Mathematics and its Applications}.
\newblock Cambridge University Press, 2003.

\bibitem{nht2}
J.~Goubault-Larrecq,
\newblock Non-Hausdorff Topology and Domain Theory, Volume~22 of {\em New
  Mathematical Monographs},
\newblock {\em Cambridge University Press}, 2013.

\bibitem{X Jia}
X. Jia, A. Jung,
\newblock {A note on coherence of dcpos},
\newblock {\em Topol. Appl.}, 209 (2016): 235-238.

\bibitem{R. Heckmann}
R. Heckmann,
\newblock {An upper power domain construction in terms of strongly compact sets},
\newblock {\em Springer-Verlag Lecture Notes in Computer Science.}, 598 (1992): 272-293.

\bibitem{Heckmann}
R. Heckmann, K. Keimel,
\newblock {Quasicontinuous domains and the Smyth powerdomain},
\newblock {\em Electron. Notes Theor. Comput. Sci.}, 298 (2013): 215-232.

\bibitem{Xiaoquan20}
X. Xu,
\newblock {A direct approach to $\mathbf{K}$-reflections of $T_{0}$ spaces},
\newblock {\em Topol. Appl.}, 272 (2020) 107076.

\bibitem{Zhao Xu}
X. Xu, D. Zhao,
\newblock {On topological Rudin's Lemma, well-filtered spaces and sober spaces},
\newblock {\em Topol. Appl.}, 272 (2020) 107080.

\bibitem{Xiaoquan}
X. Xu, D. Zhao,
\newblock {Some open problems on well-filtered spaces and sober spaces},
\newblock {\em Topol. Appl.}, (2020) 107540.

\bibitem{Xiaoquan21}
X. Xu,
\newblock {On $\mathrm{H}$-sober spaces and $\mathrm{H}$-sobrifications of $T_{0}$ spaces},
\newblock {\em Topol. Appl.}, 289 (2021) 107548.

\bibitem{Zhao15}
D. Zhao, W. Ho,
\newblock {On topologies defined by irreducible sets},
\newblock {\em J. Log. Algebr. Methods Program.}, 84.1 (2015): 185-195.

\bibitem{Wennana}
N. Wen, X. Xu,
\newblock {Some basic properties of hyper-sober spaces} (in Chinese),
\newblock {\em Fuzzy Systems and Mathematics}, accepted.

\end{thebibliography}

\end{document}